\documentclass[12pt,leqno,amscd,amssymb,verbatim, url]{amsart}
\usepackage{amsmath,amscd}
\usepackage{hyperref}
\usepackage{mathrsfs}
\oddsidemargin .2in \evensidemargin .2in \textwidth 6in


\let\oldsection=

\renewcommand{\subsection}[1]{\par\vspace{.18in}\noindent\addtocounter{subsection}{1}\setcounter{equation}{0}{\bf\thesubsection\hspace{9pt}#1}}

\usepackage[english]{babel}
\usepackage[latin1]{inputenc}
\usepackage{times}
\usepackage[T1]{fontenc}
\usepackage{amsthm, amsmath, amssymb}
\usepackage{tikz}
 
\usetikzlibrary{arrows,shapes}
\usetikzlibrary{decorations.pathmorphing}
\usetikzlibrary{calc}




\newtheorem{thm}{Theorem}[subsection]

\makeatletter\let\c@fact\c@theorem\makeatother\newtheorem{lem}[thm]{Lemma}
\newtheorem{cor}[thm]{Corollary}

\newtheorem{prop}[thm]{Proposition}

\theoremstyle{definition}

\newtheorem{conj}[thm]{Conjecture}

\newtheorem{rem}[thm]{Remark}

\numberwithin{equation}{subsection}
\numberwithin{thm}{section}
\newcommand{\irr}{\text{\rm Irr}}

\newcommand{\wH}{{\widetilde \sH}}

\newcommand{\wX}{{\widetilde{X}}}

\newcommand{\wsH}{{\widetilde{\mathcal H}}}

\newcommand{\IND}{\mbox{IND}}

\newcommand{\sH}{{\mathcal H}}
\newcommand{\sT}{{\mathcal T}}
\newcommand{\sA}{{\mathcal A}} 
\newcommand{\sX}{{\mathcal X}}

\newcommand{\sZ}{{\mathscr{Z}}}
\newcommand{\wsA}{{\widetilde{\mathcal A}}}

\newcommand{\sC}{{\mathscr {C}}}

\newcommand{\Ext}{{\text{\rm Ext}}}

\newcommand{\sG}{{\mathscr G}}
\newcommand{\sS}{{\mathcal S}}

\newcommand{\Amod}{A\mbox{--mod}}

\newcommand{\Hom}{\text{\rm Hom}}

\newcommand{\End}{\operatorname{End}}

\newcommand{\ind}{\operatorname{ind}}

\newcommand{\fN}{\operatorname{{\mathfrak{N}}}}

\newcommand{\sO}{{\mathscr{O}}}

\newcommand{\wDelta}{{\widetilde{\Delta}}}

\newcommand{\wP}{{\widetilde{P}}}

\newcommand{\wS}{{\widetilde{S}}}
\newcommand{\wT}{{\widetilde{\sT}}}

\newcommand{\wY}{{\widetilde{Y}}}



\newcommand{\blist}{\begin{list}{\rom{(\roman{enumi})}}{\setlength
{\leftmarg in}{0em} \setlength{\itemindent}{7ex}
\setlength{\labetwo-sidedlsep}{2ex}\setlength{\listparindent}{\parindent}
\usecounter{enumi}}}
\newcommand{\elist}{\end{list}}


\def\sJ{{\mathcal J}}

\def\sQ{{\mathscr Q}}

\def\sO{{\mathcal O}}
\def\la{{\lambda}}
\def\fN{{\mathfrak N}}
\def\fM{{\mathfrak M}}
\def\fL{{\mathfrak L}}
\def\sL{{\mathcal L}}
\def\bbC{{\mathbb C}}
\def\bbQ{{\mathbb Q}}
\def\sR{{\mathscr R}}
\def\wwT{{\widetilde T}}

\begin{document}

\begin{abstract}  The (Iwahori-)Hecke algebra in the title is a $q$-deformation $\sH$
of the group algebra of a finite Weyl group $W$.  The algebra $\sH$ has a natural enlargement
to an endomorphism algebra $\sA=\End_\sH(\sT)$ where $\sT$ is a $q$-permutation module. In type
$A_n$ (i.e., $W\cong {\mathfrak S}_{n+1}$), the algebra $\sA$ is a $q$-Schur algebra which is
quasi-hereditary and plays an important role in the modular representation of the
finite groups of Lie type.   In other types, $\sA$ is
not always quasi-hereditary, but the authors conjectured 20 year ago that $\sT$ can be
enlarged to an $\sH$-module $\sT^+$ so that $\sA^+=\End_\sH(\sT^+)$ is at least standardly stratified, a weaker condition than being quasi-hereditary, but with ``strata" corresponding to Kazhdan-Lusztig two-sided cells.

The main result of this paper is a ``local" version of this conjecture in the equal parameter case, viewing $\sH$ as  defined over
${\mathbb Z}[t,t^{-1}]$, with the localization at a prime ideal generated by a cyclotomic polynomial  
$\Phi_{2e}(t)$, $e\not=2$. The proof uses
the theory of rational Cherednik algebras (also known as RDAHAs) over similar localizations of ${\mathbb C}[t,t^{-1}]$. In future paper,
the authors expect to apply these results to prove global versions of the conjecture, at least in the equal parameter case
with bad primes excluded. 
 
\end{abstract}

\title[Extending Hecke endomorphism algebras]{Extending Hecke endomorphism algebras}
 \author{Jie Du}
 \address{School of Mathematics and Statistics\\University of New South Wales\\ UNSW Sydney 2052
}
 \email{j.du@unsw.edu.au}
 \author{Brian J. Parshall}
\address{Department of Mathematics \\
University of Virginia\\
Charlottesville, VA 22903} \email{bjp8w@virginia.edu {\text{\rm
(Parshall)}}}
\author{Leonard L. Scott}
\address{Department of Mathematics \\
University of Virginia\\
Charlottesville, VA 22903} \email{lls2l@virginia.edu {\text{\rm
(Scott)}}}
\subjclass{20C08, 20C33, 16S50, 16S80}
\thanks{Research supported in part by the Australian Research Council and National Science
Foundation}
\dedicatory{We dedicate this paper to the memory of Robert Steinberg.}
\maketitle
\section{Introduction}   Let $\sG=\{G(q)\}$ be a family of finite groups of Lie type having irreducible (finite)
Coxeter system $(W,S)$ \cite[(68.22)]{CR87}. The pair $(W,S)$ remains fixed throughout this paper. Let $B(q)$ be a Borel subgroup of $G(q)$. There are index parameters $c_s\in\mathbb Z$, $s\in S$, defined by
$$[B(q):\,{}^sB(q)\cap B(q)]=q^{c_s}, \quad s\in S.$$

The generic Hecke algebra $\sH$ over the ring $\sZ={\mathbb Z}[t,t^{-1}]$ of Laurent polynomials associated to $\sG$  has basis $T_w$, $w\in W$, subject
to relations
\begin{equation}\label{relations}T_sT_w=\begin{cases} T_{sw},\quad sw>w;\\
t^{2c_s}T_{sw} + (t^{2c_s}-1)T_w,\quad sw<w\end{cases}\end{equation}
for $s\in S,w\in W$.
This algebra is defined just using $t^2$, but it is convenient to have its square root $t$ available.
We call $\sH$ a Hecke algebra of Lie type over $\sZ$. It is related to the representation theory of the groups in $\sG$
as follows: for any prime power $q$, let $R$ be any field (we will shortly allow $R$ to be a ring) in which the
integer $q$ is invertible and has a square root $\sqrt{q}$. Let $\sH_R=\sH\otimes_{\sZ}R$ be the 
  algebra obtained by base change through the
map $\sZ\to R$, $t\mapsto \sqrt{q}$. Then $\sH_R\cong\End_{G(q)}(\ind_{B(q)}^{G(q)}{R})$. Thus, the
 generic Hecke algebra  $\sH$ is the quantumization (in the sense of \cite[\S0.4]{DDPW08}) of an infinite family of important endomorphism algebras.
 
In type $A_n$, i.e., when $\sG=\{GL_{n+1}(q)\}$,  one can also consider the $q$-Schur algebras, viz., algebras Morita equivalent to
\begin{equation} \label{qSchur} S_R:=\End_{\sH_R}\left(\bigoplus_{J\subseteq S}\ind_{\sH_{J,R}}^{\sH_R}\IND_J\right).\end{equation}
In this case, $S_R$ is a quasi-hereditary algebra whose representation theory is closely related to that of the quantum general linear groups. The $q$-Schur algebras have historically played an important 
role in representation theory of the finite general linear groups, thanks to the work of Dipper, James, and others. More
generally, although the definition (\ref{qSchur}) makes sense in all types, less is known about its properties or the precise role
it plays in the representation theory or homological algebra of the corresponding groups in $\sG$. The purpose of this paper, and its sequels,
is to enhance $S_R$ in a way described below, so that it does become relevant to these questions.

\medskip\noindent
\subsection{ Stratifying systems.} At this point, it will be useful to review the notion of a {\it strict stratifying system} for an algebra.  These systems provide a framework for studying algebras similar to quasi-hereditary algebras. They appear in the statement of the first main Theorem \ref{4.4}.  Although the algebras in the Theorem \ref{4.4} are shown later
 to be quasi-hereditary, the theory of stratifying systems is useful both in providing a framework and as a tool
in obtaining the final results.

First, recall 
that a preorder on a set $X$ is a transitive and reflexive relation $\leq$. The associated equivalence relation $\sim$  on $X$ is defined by setting, for $x,y\in X$,
$$x\sim y \iff x \leq y \,\&\, y \leq x.$$
A preorder induces an evident partial order, still denoted $\leq$, on the set $\overline X$ of equivalence classes of $\sim$. In this paper, a set $X$ with a preorder is called a quasi-poset. Also, if $x\in X$, let $\bar x\in \overline X$ be its associated equivalence
class.

Now let $R$ be a Noetherian commutative ring, and let  $A$ be an $R$-algebra, finitely generated and projective as an $R$-module. Let $\Lambda$ be a finite quasi-poset. For
each $\lambda\in\Lambda$, it is required that there is given a finitely generated $A$-module $\Delta(\lambda)$ and a finitely generated projective 
$ A$-module
$P(\lambda)$ together with a fixed surjective $P(\lambda)\twoheadrightarrow \Delta(\lambda)$ morphism of $A$-modules. The following
conditions are required:

\begin{itemize}
\item[(1)] For $\lambda,\mu\in\Lambda$, 
$$\Hom_{ A}(P(\lambda), \Delta(\mu))\not=0\implies \lambda\leq\mu.$$

\item[(2)] Every irreducible $A$-module $L$ is a homomorphic image of some $\Delta(\lambda)$.

\item[(3)] For $\lambda\in\Lambda$, the $A$-module $P(\lambda)$ has a finite filtration by $ A$-submodules
with top section $\Delta(\lambda)$ and other sections of the form $\Delta(\mu)$ with $\bar\mu>\bar\lambda$.
\end{itemize}
When these conditions all hold, the data $\{\Delta(\lambda)\}_{\lambda\in\Lambda}$ is a  {\it strict stratifying system} for
 $ \Amod$. It is also clear that $\Delta(\lambda)_{R'}, P(\lambda)_{R'}, \dots$ is a
strict stratifying system for $ A_{R'}$-mod {\it for any base change $R\to R'$}, provided $R'$ is a Noetherian commutative 
ring. (Notice that condition (2) is redundant, if it is known that the direct sum of the projective modules in (3) is a
progenerator---a property preserved by base change.)

An ideal $J$ in an $R$-algebra $ A$ as above is called a {\it stratifying ideal} provided that $J$ is an $R$-direct summand
of $ A$ (or equivalently, the inclusion $J\hookrightarrow A$ is $R$-split), and for $A/J$-modules $M,N$ inflation defines an isomorphism
\begin{equation}\label{recollement}
\Ext^n_{A/J}(M,N)\overset\sim\longrightarrow\Ext^n_{ A}(M,N),\quad\forall n\geq 0.\end{equation}
of Ext-groups. A {\it standard stratification} of length $n$ of $A$ is a sequence $0=J_0\subsetneq J_1\subsetneq
\cdots\subsetneq J_n= A$ of stratifying ideals of $A$ such that each $J_i/J_{i-1}$ is a projective
$A/J_{i-1}$-module. If $ \Amod$ has a strict stratifying system with quasi-poset $\Lambda$, then it has a standard stratification of length $n=|\bar\Lambda|$; see \cite[Thm. 1.2.8]{DPS98a}. 

 In the case of a finite dimensional algebra $A$ over a field $k$, the notion of a strict stratifying system 
 $\{\Delta(\lambda)\}_{\lambda\in\Lambda}$ for $\Amod$ simplifies
 somewhat. In this case, it can be assumed that each $\Delta(\lambda)$ has an irreducible head $L(\lambda)$,
 that $\lambda\not=\mu\implies L(\lambda)\not\cong L(\mu)$, and that $P(\lambda)$ is indecomposable. Two caveats are
 in order, however: (i) it may be necessary to enlarge the base set $\Lambda$ to be able to index all the irreducible
 modules, though $\overline{\Lambda}$ can remain the same; (ii)  it may be easier to verify
 (1), (2) and (3) over a larger ring and then base change. The irreducible head versions of the $\Delta(\lambda)$'s
 can then be obtained as direct summands of the base-changed versions.

When the algebra $A$  arises as an endomorphism algebra $A=\End_B(T)$, there is a useful theory for obtaining 
a strict stratifying system for $\Amod$. In fact, this is how such stratifying systems initially arose (see
\cite{CPS96} and \cite{DPS98a}). This approach is followed in the proof of the main theorem in this paper. For
convenience, we summarize the sufficient conditions that will be used, all taken from \cite[Thm. 1.2.10]{DPS98a}.

\begin{thm}\label{strattheorem}  Let $B$ be a finitely generated projective $R$-algebra and let $T$ be a finitely generated right
$B$-module which is projective over $R$. Define $A:=\End_B(T)$. Assume that $T=\oplus_{\lambda\in\Lambda}T_\lambda$, where $\Lambda$ is a finite quasi-poset. For $\lambda\in\Lambda$, assume there is given a fixed $R$-submodule
$S_\lambda\subseteq T_\lambda$ and an increasing filtration $F^\bullet_\lambda:0=F_\lambda^0\subseteq F_\lambda^1
\subseteq\cdots\subseteq F_\lambda^{t(\lambda)}=T_\lambda$ satisfying the following conditions:
\begin{itemize}
\item[(1)] For $\lambda\in\Lambda$, $F_\lambda^\bullet$ has bottom section $F_\lambda^1/F_\lambda^0
\cong S_\lambda$, and, higher sections $F_\lambda^{i+1}/F_\lambda^i$ ($1\leq i \leq t(\lambda)-1$) of the form $S_\nu$ with $\bar\nu>\bar\lambda$.
\item[(2)] For $\lambda,\mu\in\Lambda$, $\Hom_B(S_\mu,T_\lambda)\not=0\implies \lambda\leq\mu$.
\item[(3)] For $\lambda\in\Lambda$, $\Ext^1_B(T_\lambda/F^i_\lambda,T)=0$ for all $i$.
\end{itemize}
Let $A=\End_B(T)$, and, for $\lambda\in\Lambda$, define $\Delta(\lambda):=\Hom_B(S_\lambda,T)\in A$-mod. 
Assume that each $\Delta(\lambda)$ is $R$-projective. Then $\{\Delta(\lambda)\}_{\lambda\in\Lambda}$ is a
strict stratifying system for $\Amod$.
\end{thm}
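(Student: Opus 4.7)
The plan is to construct $P(\lambda) := \Hom_B(T_\lambda, T)$, surject it onto $\Delta(\lambda)$, and verify the three axioms of a strict stratifying system by transporting the filtration $F^\bullet_\lambda$ across the contravariant functor $\Hom_B(-, T)$. The three hypotheses of the theorem are tailored so that each axiom falls out of one step, and all technicalities reduce to standard endomorphism-algebra bookkeeping together with repeated use of the $\Ext^1$-vanishing in hypothesis (3).

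First I would set up the projective side. Since $T_\lambda$ is a direct summand of $T$ with idempotent projection $e_\lambda \in A$, one has $P(\lambda) \cong Ae_\lambda$ as a left $A$-module. Thus $P(\lambda)$ is finitely generated and projective over $A$, and moreover $\bigoplus_\lambda P(\lambda) \cong A$ is a progenerator.

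Next I would build the $\Delta$-filtration of $P(\lambda)$ required by axiom (3). Applying $\Hom_B(-, T)$ to the short exact sequence $0 \to F_\lambda^i \to T_\lambda \to T_\lambda/F_\lambda^i \to 0$ and invoking hypothesis (3) of the theorem---$\Ext^1_B(T_\lambda/F_\lambda^i, T) = 0$---yields a decreasing chain
$$
P(\lambda) = \Hom_B(T_\lambda/F_\lambda^0, T) \supseteq \Hom_B(T_\lambda/F_\lambda^1, T) \supseteq \cdots \supseteq \Hom_B(T_\lambda/F_\lambda^{t(\lambda)}, T) = 0.
$$
Pairing this with $0 \to F_\lambda^{i+1}/F_\lambda^i \to T_\lambda/F_\lambda^i \to T_\lambda/F_\lambda^{i+1} \to 0$ and using the same Ext-vanishing, the successive quotients are $\Hom_B(F_\lambda^{i+1}/F_\lambda^i, T)$. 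By hypothesis (1) of the theorem, the top quotient is $\Hom_B(S_\lambda, T) = \Delta(\lambda)$ (giving the desired surjection $P(\lambda) \twoheadrightarrow \Delta(\lambda)$), while the remaining sections are $\Delta(\nu)$ with $\bar\nu > \bar\lambda$, exactly as required.

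For axiom (1), I would use the identification $\Hom_A(Ae_\lambda, N) \cong e_\lambda N$ to compute
$$
\Hom_A(P(\lambda), \Delta(\mu)) \cong e_\lambda \Hom_B(S_\mu, T) \cong \Hom_B(S_\mu, T_\lambda),
$$
which by hypothesis (2) vanishes unless $\lambda \leq \mu$. Axiom (2) then follows from the redundancy remark recorded after the definition of strict stratifying system, since $\bigoplus_\lambda P(\lambda)$ is a progenerator: any irreducible $L$ is a quotient of some $P(\lambda)$, and intersecting the $\Delta$-filtration of $P(\lambda)$ constructed above with $\ker(P(\lambda) \twoheadrightarrow L)$ forces $L$ to be a quotient of one of the $\Delta(\nu)$ appearing there. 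The needed $R$-projectivity of each $\Delta(\lambda)$ is assumed. The only aspect needing care is the coherent use of hypothesis (3) for \emph{every} $i$ simultaneously so that the sequences obtained by applying $\Hom_B(-, T)$ remain short exact throughout the filtration---but this is precisely what the theorem's Ext-vanishing hypothesis is engineered to deliver, so no essential obstacle remains.
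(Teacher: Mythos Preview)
The paper does not prove this theorem; it is quoted without proof from \cite[Thm.~1.2.10]{DPS98a}. Your argument is correct and is essentially the standard one used there: set $P(\lambda)=\Hom_B(T_\lambda,T)=Ae_\lambda$, use hypothesis~(3) so that the contravariant functor $\Hom_B(-,T)$ converts the filtration $F_\lambda^\bullet$ into a $\Delta$-filtration of $P(\lambda)$ with top section $\Delta(\lambda)$, and read off axioms (1)--(3) of a strict stratifying system from hypotheses (2), the progenerator property, and (1), respectively. Your justification of axiom~(2) via the maximal-submodule argument is also fine and matches the redundancy remark in \S1.1.
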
 

It is interesting to note that these sufficient conditions are not, in general, preserved under base change, though
the resulting strict stratifying systems are preserved (becoming strict stratifying systems for the base-changed
version of the algebra $A$).

 \medskip
 \subsection{ Cells and {\it q}-permutation modules.} We assume familiarity with Kazhdan-Lusztig cell theory
 for the Coxeter systems $(W,S)$. See, for instance, \cite{DDPW08} and \cite{Lus03}.
In Conjecture \ref{conjecture} below and in the main Theorem \ref{4.4}, the set $\Lambda$ will be the set
$\Omega$ of left Kazhdan-Lusztig cells for $(W,S)$.  For each $\omega\in\Omega$, let 
\begin{equation}\label{defnleftcell}S(\omega):=\sH^{\leq_L\omega}/\sH^{<_L\omega}\in \sH{\text{\rm --mod}}\end{equation}
 be the corresponding left cell module. It is known that $S(\omega)$ is a free $\sZ$-module with basis
 corresponding to certain Kazhdan-Lusztig basis elements $C'_x$, $x\in\omega$; see \S2.
 The corresponding dual left cell module is defined
 \begin{equation}\label{dualleftcell} 
 S_\omega:=\Hom_\sZ(S(\omega),\sZ)\in\, {\text{\rm mod--}}\sH.\end{equation}
 It is regarded as a {\it right} $\sH$-module.  Because $S(\omega)$ and hence $S_\omega$ are free over $\sZ$,
 if $R$ is a commutative $\sZ$-module, we can define
 $$\begin{cases} S_R(\omega):=S(\omega)\otimes_\sZ R \\
 S_{\omega,R}:= S_{\omega}\otimes_\sZ R=\Hom_{R}(S_R(\omega),R).\end{cases}$$
 For the special choice $R=\sQ$---see (\ref{modular system}) below for the definition of $\sQ$---we also use the notations
 \begin{equation}\label{tildenotation}
 \begin{cases}\wS(\omega):=S_\sQ(\omega),\\ \wS_\omega:= S_{\omega,\sQ},\quad \omega\in\Omega.\end{cases}
 \end{equation}

 In addition, for $\lambda\subseteq S$,  let $W_\lambda$ be the parabolic subgroup of $W$ generated by the $s\in \lambda$, and put 
$x_\lambda=\sum_{w\in W_\lambda}T_w$, with $T_w$ as in (\ref{relations}) above. The induced modules $x_\lambda\sH$ (also called $q$-permutation modules) have an increasing filtration with sections
$S_\omega$ for various $\omega\in \Omega$ (precisely, those left cells $\omega$ whose right set ${\sR}(\omega)$ contains $\lambda$). 

Let $\sT=\bigoplus_\lambda x_\lambda\sH$, and $\sA:=\End_{\sH}(\sT)$. For $\omega\in\Omega$,
put $\Delta(\omega):=\Hom_\sH(S_\omega,\sT)\in\sA$-mod. The algebra $\sA$ is very well behaved in type A, a $q$-Schur algebra, and a theme of \cite{DPS98a} was that suitable enlargements, appropriately compatible with two-sided cell theory, should have similar good properties for all types.

Each two-sided cell may be identified with the set of left cells it contains, and the resulting collection $\overline \Omega$ of sets of left cells is a partition of $\Omega$. There are various natural preorders on $\Omega$, but we will be mainly interested in those whose associated equivalence relation has precisely the set $\overline\Omega$ as its associated partition. We call such a preorder {\it strictly compatible} with $\overline \Omega$.

\medskip\noindent
\subsection{A conjecture.} Now we are ready to state the following conjecture, which is a variation (see the Appendix) on \cite[Conj. 2.5.2]{DPS98a}.  We informally think of the algebra
$\sA^+$ in the conjecture as an extension of $\sA$ as a Hecke endomorphism algebra (justifying the title of the paper).

\begin{conj}\label{conjecture} {\it There exists a preorder $\leq$ on the set $\Omega$ of left cells in $W$, strictly compatible with its partition $\overline \Omega$ into two-sided cells, and a right $\sH$-module $\sX$ such that the following statements hold:

\begin{enumerate}
\item[(1)] $\sX$ has an finite filtration with sections of the form $S_\omega$, $\omega\in\Omega$.

\item[(2)] Let $\sT^+:=\!\sT\oplus \sX$ and put 
$$\begin{cases} \sA^+:=\End_{\sH}(\sT^+) ,\\
\Delta^+(\omega):=\Hom_{\sH}(S_\omega,\sT^+),\,\,{\text{\rm for any $\omega\in\Omega$.}}\end{cases}$$
Then, for any commutative, Noetherian\! $\sZ$-algebra\!  $R$, the set $\{\Delta^+(\omega)_R\}_{\omega\in\Omega}$ is a strict stratifying system for $\sA^+_R$-mod relative to the quasi-poset\!
$(\Omega,\leq)$. 
\end{enumerate}}
\end{conj}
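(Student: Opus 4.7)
The plan is to apply Theorem \ref{strattheorem} with $B = \sH$ and $T = \sT^+ = \sT \oplus \sX$, indexed by $\Lambda = \Omega$ equipped with a preorder to be specified. Because strict stratifying systems are preserved under any Noetherian base change $\sZ \to R$, it suffices to verify conditions (1)--(3) of that theorem once, at the generic level over $\sZ$. For the preorder on $\Omega$, I would take the Kazhdan--Lusztig left preorder $\leq_L$ (or a mild refinement of it) chosen so that the induced equivalence relation is exactly the partition $\overline\Omega$ into two-sided cells; in this way strict compatibility with $\overline\Omega$ is built into the setup.

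The core construction is the enlargement $\sT^+$. The existing $\sT = \bigoplus_\lambda x_\lambda\sH$ already carries on each summand an increasing filtration whose sections are the dual left cell modules $S_\omega$ for left cells $\omega$ with $\sR(\omega) \supseteq \lambda$. In type $A$, every left cell occurs as the bottom section of some indecomposable summand of some $x_\lambda\sH$, which is why $\sA$ is itself quasi-hereditary there; in other types, additional $\sH$-modules must be supplied. I would construct $\sX$ by using Rouquier's KZ functor from category $\sO$ of an associated rational Cherednik algebra: the KZ-images of the projective covers of standard modules are expected to furnish, for each left cell $\omega$, an $\sH$-module $T_\omega$ with an increasing filtration whose bottom section is $S_\omega$ and whose higher sections are of the form $S_\nu$ with $\bar\nu > \bar\omega$ in the two-sided cell order. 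Taking $\sX$ to be the direct sum of those $T_\omega$ not already realized inside $\sT$ then produces $\sT^+$ with a left-cell-indexed direct sum decomposition.

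Verification of the hypotheses of Theorem \ref{strattheorem} would proceed as follows. Condition (1) on filtration sections is immediate from the construction. Condition (2), $\Hom_\sH(S_\mu, T_\omega) \neq 0 \implies \omega \leq \mu$, is reduced to standard vanishing for maps between (dual) cell modules, combined with the filtration structure on $T_\omega$. Condition (3), the vanishing of $\Ext^1_\sH(T_\omega/F^i_\omega, \sT^+)$, is the genuine content: one would deduce it from the quasi-hereditary structure of category $\sO$ via the fact that KZ is a highest-weight cover, transferring Ext-vanishing in $\sO$ to Ext-vanishing in $\sH$-mod on the relevant filtered modules.

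The main obstacle is precisely condition (3). Generically over $\sZ$, no direct handle on $\Ext^1$ between Hecke cell modules is available, and the entire strategy leans on KZ being at least \emph{$1$-faithful} in Rouquier's sense, so that $\Ext^1$ in category $\sO$ injects into (and surjects onto) $\Ext^1$ in $\sH$-mod on the relevant subcategory. This faithfulness is presently accessible only after suitable localization, which is exactly why the paper first establishes a \emph{local} version at a cyclotomic prime $\Phi_{2e}(t)$ with $e \neq 2$. Beyond that case, the principal challenge is to produce a single $\sX$ defined over $\sZ$ (or a controlled extension thereof) whose base change at each relevant prime realizes the stratifying system predicted by the Cherednik picture; patching the local data across all primes and handling bad primes uniformly is where I would expect the remainder of any future attack on the global conjecture to concentrate.
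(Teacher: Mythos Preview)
This statement is a \emph{conjecture}; the paper does not prove it, and your proposal is not a proof either---as you yourself acknowledge, the $\Ext^1$-vanishing needed for condition~(3) of Theorem~\ref{strattheorem} is only accessible after localization. What the paper does establish is the local analogue Theorem~\ref{4.4} over the DVR $\sQ={\mathbb Q}[t,t^{-1}]_{(\Phi_{2e}(t))}$ with $e\neq 2$, so the honest comparison is between your sketch and that argument.

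Two substantive differences stand out. First, the preorder: $\leq_L$ on $\Omega$ has singleton equivalence classes (distinct left cells are never $\sim_L$-equivalent), and no refinement can enlarge them to two-sided cells; more seriously, it points the wrong way, since the higher filtration sections of $x_\lambda\sH$ are indexed by cells $\omega_i<_L\omega_\lambda$, so the stratifying preorder must refine $\leq_L^{\text{op}}$. The paper uses $\leq_f$, built from the sorting function $f(E)=a_E+A_E$ of \cite{GGOR03}, whose equivalence classes are precisely the two-sided cells. Second, the construction of the extra summands $\wX_\omega$: the paper does \emph{not} take KZ-images of projective covers. Instead it runs an elementary iterated-extension procedure (\S\ref{firstconstruction}--\S\ref{secondconstruction}, driven by Lemma~\ref{abstract1}) that kills $\Ext^1_{\wsH}(\wS_\tau,-)$ one $f$-level at a time. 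The sole RDAHA input is Proposition~\ref{2.2}, yielding the order control $\Ext^1_{\wsH}(\wS_\omega,\wS_{\omega'})\neq0\Rightarrow f(\omega)>f(\omega')$ (Corollary~\ref{Cor5.3}) that makes the iteration terminate with the correct filtration shape; for the $x_\lambda\wsH$ summands, condition~(3) comes instead from the Kazhdan--Lusztig basis computation in Lemma~\ref{NM} and Corollary~\ref{vanishing}. The identification of the $\wX_E$ with KZ-images of projectives appears only afterward in \S6 and requires separate work. Your instinct to invoke KZ is reasonable, but the paper's separation of the elementary Ext-killing construction from a single RDAHA lemma is what makes the local argument go through.
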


The main result of this paper, given in Theorem \ref{4.4}, establishes a special ``local case" of this
conjecture. 
A more detailed description of this theorem requires some preliminary
notation. Throughout this paper, $e$ is positive integer ($\not=2$ in our main results). Let $\Phi_{2e}(t)$
denote the (cyclotomic) minimum polynomial for a primitive $2e$th root of unity $\sqrt{\zeta}=\exp({2\pi i/2e})\in{\mathbb C}$. 
Fix a modular system $(K,\sQ, k)$ by letting
\begin{equation}\label{modular system}
\begin{cases}
\sQ:={\mathbb Q}[t,t^{-1]}]_{\mathfrak p},{\text{\rm where} }\,\,{\mathfrak p}=(\Phi_{2e}(t));\\
K:=  {\mathbb Q}(t),\,\, {\text{\rm the fraction field of }}\,\, \sQ; \\
k:=\sQ/{\mathfrak m}\cong {\mathbb Q}(\sqrt{\zeta}),\,{\text{\rm the residue field of }} \,\,\sQ.
\end{cases}
\end{equation}      

Here $\mathfrak m$ denotes the maximal ideal of the the DVR $\sQ$. With some abuse of notation, we sometimes identify $\sqrt{\zeta}$ with its image in $k$. (Without passing to an extension or completion, the ring $\sQ$ might not have such a
root of unity in it.)
The algebra $\sH_{{\mathbb Q}(t)}$ is split semisimple, with irreducible
modules corresponding to the irreducible modules of the group algebra ${\mathbb Q}W$. The $\sQ$-algebra
\begin{equation}\label{tildeH}
\widetilde\sH:=\sH\otimes_\sZ\sQ \end{equation}
has a presentation by elements $T_w\otimes 1$ (which will still be denoted
$T_w$, $w\in W$) completely analogous to (\ref{relations}). Similar remarks apply to
$\sH_k$,  replacing $t^2$ by $\zeta$.
  Then Theorem \ref{4.4} establishes that there
exists
 a $\widetilde\sH$-module $\widetilde{\mathcal X}$ which is filtered by
dual left cell modules $\wS_\omega$ such that the analogues of conditions (1) and (2) over $\sQ$ in Conjecture \ref{conjecture} hold.  The preorder used in Theorem \ref{4.4} is constructed as in \cite{GGOR03} from a
``sorting function" $f$, and is discussed in detail in the next section.

With more work, it 
can be shown, when $e\not=2$, that the $\sQ$-algebra $\widetilde\sA^+:=\sA^+_\sQ$ is quasi-hereditary. This
is done in Theorem \ref{cases}.  Then Theorem \ref{cor6.5} identifies the module category for a base-changed
version of this algebra with a RDAHA-category $\sO$ in \cite{GGOR03}.  Such an identification in type $A$ was conjectured
 in \cite{GGOR03}, and then proved by Rouquier in \cite{Ro08} (when $e\not=2$).

Generally speaking, this paper focuses on the ``equal parameter" case (i.e., all $c_s=1$ in (1.0.1)), which
covers the Hecke algebras relevant to all untwisted finite Chevalley groups. We will assume this condition unless
explicitly stated otherwise, avoiding  a number of complications involving Kazhdan-Lusztig basis elements and Lusztig's algebra $\sJ$. In this context, the critical Proposition \ref{2.2} depends on results of
\cite{GGOR03} which, in part, were only determined in the equal parameter case.  Nevertheless, much of our discussion applies in the unequal parameter cases. In particular, we mention that the elementary, but important,
Lemma \ref{NM} is stated and proved using unequal parameter notation. This encourages the authors to believe the main
results are also provable in the unequal parameter case, though this has not yet been carried out. Note that all the
rank 2 cases are treated in \cite{DPS98a}, leaving the quasi-split cases with rank $>2$. All these quasi-split cases have parameters confined to the set $\{1,2,3\}$.

\section{Some preliminaries}
This section recalls some mostly well-known facts and fixes notation regarding cell theory.
Let $W$ be a finite Weyl group associated to a finite root system $\Phi$ with a fixed set of simple roots $\Pi$.  Let $S:=\{s_\alpha\,|\,\alpha\in\Pi\}$.  Let $\sH$ is a Hecke algebra over $\sZ$ defined by (\ref{relations}). We assume (unless explicitly noted
otherwise) that each $c_s=1$ for $s\in S$. Thus, $(W,S)$ corresponds, in the language of the introduction, to some types of split Chevalley groups, though we will have no further need of that context.
 Let 
$$C'_w=t^{-l(w)}\sum_{y\leq w}P_{y,w}T_y,$$
where the $P_{y,w}$ is a Kazhdan-Lusztig polynomial in ${\mathfrak q}:=t^2$. 
Then $\{C'_w\}_{w\in W}$ is a Kazhdan--Lusztig (or canonical) basis for $\sH$. The element $C'_x$ is denoted $c_x$ 
in \cite{Lus03}, a reference we frequently quote. Let $h_{x,y,z}\in\sZ$ denote the structure constants. In other words, 
$$C'_xC'_y=\sum_{z\in W}h_{x,y,z}C'_z.$$
Using the preorders $\leq_L$ and $\leq_R$ on $W$, the positivity (see \cite[\S7.8]{DDPW08}) of the coefficients of the $h_{x,y,z}$ implies 
\begin{equation}\label{hxyz}
h_{x,y,z}\neq0\implies  z\leq_Ly, z\leq_R x
\end{equation}

The Lusztig function $a:W\longrightarrow{\mathbb N}$ is defined as follows. For $z\in W$, let $a(z)$ be the smallest
 nonnegative integer such that $t^{a(z)}h_{x,y,z}\in
{\mathbb N}[t]$ for all $x,y\in W$. It may equally be defined as the smallest nonnegative integer such that
$t^{-a(x)}h_{x,y,z}\in{\mathbb N}[t^{-1}]$, as used in \cite{Lus03} (or see \cite[\S7.8]{DDPW08}).  In fact, each $h_{x,y,z}$ is invariant under the automorphism
$\sZ\to \sZ$ sending $t$ to $t^{-1}$.
It is not difficult to see that $a(z)=a(z^{-1})$. For $x,y,z\in W$,
let $\gamma_{x,y,z}$ be the coefficient of $t^{-a(z)}$ in $h_{x,y,z^{-1}}$. 
Also,  by
\cite[Conjs. 14.2({\bf P8}),15.6]{Lus03},
\begin{equation}\label{gamma}
\gamma_{x,y,z}\neq0\implies x\sim_Ly^{-1}, y\sim_Lz^{-1},z\sim_Lx^{-1}.
\end{equation}

The function $a$ is constant on two-sided cells in $W$, and so can be regarded as a function (with values in
${\mathbb N}$ on (a) the set of two-sided cells; (b) the set of left (or right) cells; and (c) the set $\irr({\mathbb Q}W)$
of irreducible ${\mathbb Q}W$-modules.\footnote{It is well-known that $\mathbb Q$ is a splitting field
for $W$ \cite{B71}.} In addition, $a$ is related to the generic degrees $d_E$, $E\in\irr({\mathbb Q}W)$.
For $E\in\irr({\mathbb Q}W)$, let
$d_E= bt^{a_E} + \cdots + ct^{A_E},$ with $a_E\leq A_E$ and $bc\not=0$, so that $t^{a_E}$ (resp., $t^{A_E}$) is the lowest (resp., largest) power
of $t$ appearing nontrivially in $d_E$. Then $a_E=a(E)$; cf. \cite[Prop. 20.6]{Lus03}. Also, as noted in \cite[\S6]{GGOR03},  $A_E=N-a(E\otimes\det)$, where $N$ is the number
of positive roots in $\Phi$.
Following \cite[\S6]{GGOR03}, we will use the ``sorting function" $f:\irr({\mathbb Q}W)\to\mathbb N$ defined by
\begin{equation}\label{sorting} f(E)= a_E +A_E=a(E)+N-a(E\otimes\det).\end{equation}
The function $f$ is also constant on two-sided cells: if $E$ is an irreducible ${\mathbb Q}W$-module associated a two-sided cell $\bold c$, 
then $E\otimes \det$ is an irreducible module associated to the two-sided cell $w_0{\bold c}$. See \cite[Lem. 5.14(iii)]{Lus84}

The function $f$ is used in \cite{GGOR03} to define various order structures on the set $\irr({\mathbb Q}W)$ of irreducible ${\mathbb Q}W$-modules. Put
$E<_fE'$ (our notation) provided $f(E)<f(E')$. There are at least two natural ways to extend $<_f$ to a preorder. 
The first way, which is only in the background for us, is to set $E\preceq_f E'\iff E\cong E'$ or $E<_fE'$. This gives a poset
structure, and is used, in effect, by \cite{GGOR03} for defining a highest category $\sO$; see \cite[\S2.5, 6.2.1]{GGOR03}.

We use $<_f$ here to define a preorder $\leq_f$ on the set $\Omega$ of left cells:  First, observe that the function
$f$ above is constant on irreducible modules associated to the same left cell (or even the same two-sided cell) 
and
so may be viewed as a function on $\Omega$. 
We can now define the (somewhat subtle) preorder $\leq_f$ on
$\Omega$ by setting $\omega\leq_f\omega'$ (for $\omega,\omega'\in\Omega$) if and only if either $f(\omega)<f(\omega')$, or $\omega$ and $\omega'$
{\it lie in the same two-sided cell.}  Note that the ``equivalence classes" of the preorder $\leq_f$  identify with the set of two-sided cells---thus, $\leq_f$
is strictly compatible with the set of two-sided cells in the sense of \S1.
Also,
\begin{equation}\label{LRorder}
E<_{LR}E' \implies E'<_f E;
\end{equation}
 see \cite[Lem. 6.6]{GGOR03}. Here $E,E'$ are in $\irr({\mathbb Q}W)$, and the notation $E<_{LR}E'$ means
 that the two-sided cell associated with $E$ is strictly smaller than that associated with $E'$, with respect to
 the Kazhdan-Lusztig order on two-sided cells. A  property similar to (\ref{LRorder}) holds if $<_{LR}$ is
 replaced with $<_L$, defined similarly, but using left cells. In terms of $\Omega$, this left cell version reads:
\begin{equation}\label{LRorder2}
\omega,\omega'\in\Omega, \omega<_L\omega'\implies f(\omega)>f(\omega').\end{equation}
Notice that (\ref{LRorder2}) follows from (\ref{LRorder}) using \cite[Cor. 1.9(c)]{Lus87b}. (The latter result
implies that $\omega,\omega'$ on the left in (\ref{LRorder2}) cannot belong to the same two-sided cell.)
Thus, the preorder $\leq_f$ is a refinement of the preorder  $\leq_{L}^{\text{\rm op}}$ on $\Omega$, and $\leq_f$ induces on the set of two-sided cells a refinement of the partial order
$\leq_{LR}^{\text{\rm op}}$. For further discussion, see the Appendix.

\section{(Dual) Specht modules of Ginzburg--Guay--Opdam--Rouquier}

The asymptotic form $\sJ$ of $\sH$ is a ring with $\mathbb Z$-basis $\{j_x\mid x\in W\}$ and multiplication
$$j_xj_y=\sum_{z}\gamma_{x,y,z^{-1}}j_z.$$ 
This ring was originally introduced in \cite{Lus87b}, though we follow \cite[18.3]{Lus03},  using
 a slightly different notation. 
 
 \subsection{The mapping $\varpi$ and its properties.} As per
\cite[18.9]{Lus03}, define
 a $\sZ$-algebra homomorphism
 \begin{equation}\label{themap}\varpi:\sH\to \sJ_{\sZ}=\sJ\otimes\sZ,\quad C'_w\longmapsto \sum_{z\in W}\sum_{d\in\mathbf D\atop a(d)=a(z)}h_{w,d,z}j_z,\end{equation}
 where $\mathbf D$ is the set
 of distinguished involutions in $W$.  Also, for any $\sZ$-algebra $R$, there is also an algebra homomorphism $\varpi_R:\sH_R=\sH\otimes_\sZ R
 \to \sJ_R=\sJ_\sZ\otimes_\sZ R$, obtained by base change. In obvious cases, we often drop the subscript $R$ from $\varpi_R$. 
 
 In particular, $\varpi_{{\mathbb Q}(t)}$ becomes an isomorphism 
 \begin{equation} \label{maptwo}\varpi=\varpi_{{\mathbb Q}(t)}:\sH_{\mathbb Q(t)}\overset\sim\longrightarrow\sJ_{\mathbb Q(t)}.\end{equation}
 See \cite{Lus87b}.
 Also, $\varpi$ induces a monomorphism 
 \begin{equation}\label{varpi}
 \varpi=\varpi_{{\mathbb Q}[t,t^{-1}]}:\sH_{{\mathbb Q}[t,t^{-1}]}\hookrightarrow \sJ_{{\mathbb Q}[t,t^{-1}]}=\sJ_{\mathbb Q}\otimes{\mathbb Q}[t,t^{-1}].
 \end{equation}
 Moreover, base change to ${\mathbb Q[t,t^{-1}]}/(t-1)$ induces an isomorphism 
 \begin{equation}\label{map3}\mathbb \varpi=\varpi_{\mathbb Q}:{\mathbb Q}W\overset\sim\longrightarrow\sJ_{\mathbb Q}\end{equation}
  (cf., \cite[Prop.~1.7]{Lus87c}). This allows us to 
 identify irreducible $\mathbb QW$-modules with irreducible $\sJ_{\mathbb Q}$-modules.\footnote{The map $\varpi$ is the composition $\phi\circ\dagger$, where $\phi$ and $\dagger$ are defined in \cite[18.9]{Lus03}  and \cite[3.5]{Lus87c}, respectively.  The numbers $\widehat
 n_z$ appearing there (which are $\pm 1$ by definition in \cite[\S18.8]{Lus03}) are all equal to 1, because of the positivity (see \cite[\S7.8]{Lus03}) of the structure constants appearing in \cite[14.1]{Lus03}. This $\varpi$ is not the same one as defined in \cite[p.647]{GGOR03}, where the $C$-basis was used. Nevertheless, the arguments of \cite[\S6]{GGOR03} go through, using the $C'$-basis and our $\varpi$ (see Remark \ref{AfterLemma5.1} below), so
\cite[Thm. 6.8]{GGOR03} guarantees the modules $S_{\mathbb C}(E)$ defined below using our set-up are the same,
at least up to a (two-sided cell preserving) permutation of the isomorphism types labeled by the $E$'s, as the modules
$S(E)$ defined in \cite[Defn. 6.1]{GGOR03} with $R={\mathbb C}$.  The proof of \cite[Thm. 6.8]{GGOR03} also establishes
such an identification of the various modules $S_R(E)$ when $R$ is 
 a completion of ${\mathbb C}[t,t^{-1}]$.}

 For the irreducible (left) $\sJ_{\mathbb Q}$-module identified with $E\in{\text{\rm Irr}}(\mathbb QW)$, the (left) $\sH_{\mathbb Q[t,t^{-1}]}$-module 
 $$S(E):=\varpi^*(E\otimes \mathbb Q[t,t^{-1}])=\varpi^*(E_{\mathbb Q[t,t^{-1}]})$$
 is called here a {\it dual Specht module} for $\sH_{{\mathbb Q}[t,t^{-1}]}$; cf. \cite[Cor.~6.10]{GGOR03}.\footnote{
  In \cite[Defn. 6.1]{GGOR03}, the module $S(E)$ there is called a standard module. Our choice of terminology is justified by the discussion
  following the proof of Lemma~\ref{Lemma5.1} below.}  Note that $S(E)\cong E_{{\mathbb Q}[t,t^{-1}]}$ as a ${\mathbb Q}[t,t^{-1}]$-module. Therefore, $S(E)$ is a free ${\mathbb Q}[t,t^{-1}]$-module.
  Putting $S_E=\Hom_{{\mathbb Q}[t,t^{-1}]}(S(E),{\mathbb Q}[t,t^{-1}])$,
  define
  \begin{equation}\label{Spechtdual}
  \begin{cases} \wS(E):=S_\sQ(E), \\
  \wS_E:=S_{E,\sQ},\end{cases}\end{equation}
  where, in general, for base change to a commutative, Noetherian ${\mathbb Q}[t,t^{-1}]$-algebra $R$,
  $$\begin{cases} 
  S_R(E):=S(E)\otimes_{{\mathbb Q}[t,t^{-1}]} R,\\ S_{E,R}:= S_E\otimes_{{\mathbb Q}]t,t^{-1}]} R\cong
  \Hom_{R}(S_R(E),R).\end{cases}
  $$
  


The following proposition is proved using RDAHAs, and it is the only ingredient in the proof of Theorem \ref{4.4} where
these algebras are used.

\begin{prop}\label{2.2}   Assume that $e\not=2$. Suppose
$E, E'$ are irreducible ${\mathbb Q}W$-modules.  If $E\not\cong E'$ and 
$$\Hom_{\sH_k}(S_k(E), S_k(E'))\not=0,$$ then $f(E)<f({E'})$.   Also,
$\Hom_{\sH_k}(S_k(E),S_k(E))\cong k$.\end{prop}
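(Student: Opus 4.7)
The plan is to derive the statement from the highest-weight structure of the rational Cherednik category $\mathcal{O}_c$ of \cite{GGOR03}, for the parameter $c$ attached to $e$. This is the sole place where the hypothesis $e\neq 2$ is needed: for $e=2$ there are subtleties in the RDAHA setup that \cite{GGOR03} does not handle in the equal-parameter Weyl-group case.

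First, I would recall that $\mathcal{O}_c$ is a highest weight category over $k$ (or a suitable completion, as flagged in the footnote) with standards $\Delta(E)$ and simples $L(E)$ indexed by $E\in\irr({\mathbb Q}W)$, ordered by the preorder $\preceq_f$ generated by the sorting function $f$. By general highest-weight theory: $\Hom_{\mathcal{O}_c}(\Delta(E),\Delta(E'))\ne 0$ forces $L(E)$ to occur as a composition factor of $\Delta(E')$, which in turn forces $E\preceq_f E'$ and hence (when $E\not\cong E'$) $f(E)<f(E')$; and $\End_{\mathcal{O}_c}(\Delta(E))\cong k$ because $L(E)$ appears in $\Delta(E)$ with multiplicity one. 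Next, I would invoke the KZ functor $\text{KZ}\colon \mathcal{O}_c\to \sH_k\mbox{-mod}$ from \cite[\S5]{GGOR03}, which is exact and, by \cite[Thm. 6.8, Cor. 6.10]{GGOR03} combined with the footnote's comparison of the $C$- and $C'$-basis conventions through $\varpi$, sends $\Delta(E)$ to $S_k(E)$ up to a permutation of isomorphism classes that preserves two-sided cells (and therefore preserves $f$).

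The key technical input I would quote from \cite{GGOR03} is that $\text{KZ}$ is a quotient functor that is fully faithful on the full subcategory of projective objects of $\mathcal{O}_c$. Choosing a projective cover $P(E)\twoheadrightarrow \Delta(E)$, this yields
$$\Hom_{\sH_k}(\text{KZ}(P(E)),S_k(E'))\;\cong\;\Hom_{\mathcal{O}_c}(P(E),\Delta(E'))\;\cong\;[\Delta(E'):L(E)]\cdot k,$$
which vanishes unless $E\preceq_f E'$ and is one-dimensional when $E=E'$. Any nonzero map $\phi\colon S_k(E)\to S_k(E')$, composed with the (exact-image) surjection $\text{KZ}(P(E))\twoheadrightarrow S_k(E)$, produces a nonzero element of the left-hand side, forcing $E\preceq_f E'$ and hence $f(E)<f(E')$ when $E\not\cong E'$. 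For the second assertion, this same precomposition gives an injection $\End_{\sH_k}(S_k(E))\hookrightarrow \Hom_{\sH_k}(\text{KZ}(P(E)),S_k(E))\cong k$ whose image contains the identity, so $\End_{\sH_k}(S_k(E))\cong k$.

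The main obstacle is the transfer step across $\text{KZ}$: one must invoke the nontrivial full-faithfulness-on-projectives theorem of \cite{GGOR03} and navigate the convention change flagged in the footnote (without it, the identification of $\text{KZ}(\Delta(E))$ with the ``right'' $S_k(E)$ is not literal, only up to the $f$-preserving permutation). Granted these inputs, the rest is essentially formal: a standard application of highest-weight-category formalism, using that maps out of the projective cover of $L(E)$ read off $L(E)$-composition multiplicities.
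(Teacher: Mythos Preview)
Your approach is correct and uses the same ingredients as the paper: the RDAHA category $\mathcal O$, the KZ functor, and the full-faithfulness input from \cite{GGOR03} that requires $e\neq 2$. The paper's argument is a little more direct: after passing to $\mathbb C$, it quotes \cite[Prop.~5.9]{GGOR03} to obtain the isomorphism
\[
\Hom_{\mathcal O}(\Delta(E),\Delta(E'))\;\cong\;\Hom_{\sH_{\mathbb C}}(S_{\mathbb C}(E),S_{\mathbb C}(E'))
\]
in one step, and then reads off both conclusions from standard highest-weight-category facts about $\Hom$ between standard modules. Your detour through the projective cover $P(E)$ reaches the same conclusion, but note that the isomorphism $\Hom_{\sH_k}(\mathrm{KZ}(P(E)),S_k(E'))\cong\Hom_{\mathcal O}(P(E),\Delta(E'))$ you invoke already needs full faithfulness with a $\Delta$-filtered (not necessarily projective) second argument---i.e., essentially the same statement the paper quotes---so the phrase ``fully faithful on the full subcategory of projective objects'' undersells what you are actually using.
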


\begin{proof}Without loss, we replace $k$ in the statement of the proposition by ${\mathbb C}$, using the analogous definitions of $S_{\mathbb C}(E)$. In addition, the statement of the proposition is invariant under any two-sided cell
preserving permutation of the labeling of the irreducible modules. After applying such a permutation on the right (say)
we may assume, by \cite[Thm. 6.8]{GGOR03}  and taking into account ftn.~1, that
$${\text{\rm KZ}}(\Delta(E))\cong S_{\mathbb C}(E),$$
where 
\begin{enumerate}
\item[(1)] $\Delta(E)$ is the standard module for a highest weight category $\sO$ given in \cite{GGOR03},
having partial order $\leq_f$ (see \cite[Lem.~2.9, 6.2.1]{GGOR03}) on its set of isomorphism classes of irreducible modules, which are indexed by
isomorphism classes of irreducible $ {\mathbb Q}W$-modules . We take $k_{H,1}= 1/e>0$ in \cite{GGOR03} above Thm. 6.8
 and in Rem. 3.2 there.
 
 \item[(2)] The functor $ {\text{\rm KZ}}:\sO\longrightarrow \overline\sO$ is naturally isomorphic to 
 the quotient  map $M\mapsto\overline M$ in \cite[Prop. 5.9, Thm.~5.14]{GGOR03}, the quotient category there identifying with $\sH_{\mathbb C}$-mod.
 
 \end{enumerate}
 
 Using \cite[Prop. 5.9]{GGOR03}, which requires $e\not=2$, we have, for any irreducible ${\mathbb C}W$-modules $E,E'$, 
 $$\Hom_\sO(\Delta(E),\Delta(E'))\cong\Hom_{\overline\sO}(\overline\Delta(E),\overline\Delta(E'))
 \cong\Hom_{\sH_{\mathbb C}}(S_{\mathbb C}(E), S_{\mathbb C}(E')).$$
 If $E\not\cong E'$, then $\Delta(E)\not\cong\Delta(E'))$ and $\Hom_\sO(\Delta(E),\Delta(E'))\not=0$
 implies that $E<_fE'$, i.e., $f(E)<f(E')$.
 
 On the other hand, if $E\cong E'$, then $\Hom_\sO(\Delta(E),\Delta(E'))\cong\mathbb C$. This implies
 $$\Hom_{\sH_{\mathbb C}}(S_{\mathbb C}(E),S_{\mathbb C}(E'))\cong {\mathbb C}.$$ 
 Returning to the original $k={\mathbb Q}(\sqrt{\zeta})$, we may
 conclude the same isomorphism holds in the original setting as well.
\end{proof}

\begin{cor}\label{ext1} Assume $e\not=2$. Let $E,E'$ be irreducible ${\mathbb Q}W$-modules. Then 
$$\Ext^1_{\wH}(\wS(E), \wS(E'))\not=0\implies f(E)<f(E').$$
In particular, $\Ext^1_{\wH}(\wS(E), \wS(E))=0$.
\end{cor}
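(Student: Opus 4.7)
The plan is a modular reduction over the DVR $\sQ$, leveraging Proposition \ref{2.2} and the semisimplicity of $\sH_K$. Since $\wS(E')$ is $\sQ$-free, multiplication by a uniformizer $\pi\in\sQ$ yields a short exact sequence $0 \to \wS(E') \xrightarrow{\pi} \wS(E') \to S_k(E') \to 0$ of $\wH$-modules. Applying $\Hom_{\wH}(\wS(E), -)$ produces a long exact sequence in which the middle term can be identified as
\[
\Hom_{\wH}(\wS(E), S_k(E')) \cong \Hom_{\sH_k}(S_k(E), S_k(E')),
\]
because $\mathfrak m$ annihilates the target, so any $\wH$-map from $\wS(E)$ factors through $\wS(E)/\mathfrak m \wS(E) = S_k(E)$.

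The key step is to check, assuming the contrapositive hypothesis $f(E) \ge f(E')$, that $\pi$ acts injectively on $\Ext^1_{\wH}(\wS(E),\wS(E'))$. If $E\not\cong E'$, then $f(E)\ge f(E')$ combined with the contrapositive of Proposition \ref{2.2} gives $\Hom_{\sH_k}(S_k(E),S_k(E'))=0$, so the connecting map in the long exact sequence is trivially zero. If $E\cong E'$, Proposition \ref{2.2} gives $\Hom_{\sH_k}(S_k(E),S_k(E))\cong k$, but the identity element $\id_{\wS(E)}\in\Hom_{\wH}(\wS(E),\wS(E))$ reduces to $\id_{S_k(E)}$, which spans this copy of $k$; hence the preceding map in the long exact sequence is surjective, and the connecting map is again zero. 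In either case $\pi$ is injective on $\Ext^1_{\wH}(\wS(E),\wS(E'))$.

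To finish, I would use that $\sH_K = \wH\otimes_\sQ K$ is split semisimple (mentioned in the introduction), so $\Ext^1_{\sH_K}(S_K(E),S_K(E'))=0$. Choosing a resolution of $\wS(E)$ by finitely generated projective $\wH$-modules and using flatness of $K/\sQ$ together with the compatibility of $\Hom$ with base change for finitely generated projectives, one obtains $\Ext^1_{\wH}(\wS(E),\wS(E'))\otimes_\sQ K = 0$. Hence this Ext-group is a finitely generated $\sQ$-torsion module, and the $\pi$-injectivity above forces it to vanish. The ``in particular'' claim is the case $E=E'$.

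The main technical point is the $E\cong E'$ case: there $\Hom_{\sH_k}(S_k(E),S_k(E))$ is nonzero, so one cannot kill the connecting map merely by vanishing of Hom. The required observation is that the identity endomorphism of $\wS(E)$ already surjects onto the one-dimensional space $\Hom_{\sH_k}(S_k(E),S_k(E))$, so that the connecting map is nevertheless zero. Everything else reduces to standard DVR bookkeeping.
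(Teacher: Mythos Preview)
Your proof is correct and follows essentially the same approach as the paper: both use the short exact sequence $0\to\wS(E')\xrightarrow{\pi}\wS(E')\to S_k(E')\to 0$, reduce to showing surjectivity of $\Hom_{\wH}(\wS(E),\wS(E'))\to\Hom_{\sH_k}(S_k(E),S_k(E'))$ under the hypothesis $f(E)\not<f(E')$, handle the two cases $E\not\cong E'$ and $E\cong E'$ via Proposition~\ref{2.2}, and then use semisimplicity of $\sH_K$ to conclude the $\Ext^1$-group is torsion and hence zero. Your treatment of the $E\cong E'$ case (the identity reducing to a generator of the one-dimensional $\Hom$) is exactly the paper's ``restriction to $\sQ\subseteq\Hom_{\wH}(\wS(E),\wS(E'))$'' spelled out.
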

\begin{proof} In (\ref{modular system}) let $\pi=\Phi_{2e}(t)$ be the generator of the maximal ideal $\mathfrak m$
of $\sQ$, and consider the short exact sequence
$$0\to \wS(E')\overset\pi\longrightarrow \wS(E')\to S_k(E')\to 0.$$  By the long exact sequence of $\Ext$, there is  an exact sequence 
$$\begin{aligned} 0\to \Hom_\wH(\wS(E),\wS(E'))&\overset\pi\to \Hom_\wH(\wS(E),\wS(E'))\to \Hom_{\wH_k}(S_k(E),S_k(E'))\\ &\to 
\Ext^1_\wH(\wS(E),\wS(E'))\overset\pi\to\Ext^1_\wH(\wS(E),\wS(E'))\\ &\to \Ext^1_{\wH_k}(S_k(E),S_k(E')).\end{aligned}$$
Because $\sH_{{\mathbb Q}(t)}=\wH_{{\mathbb Q}(t)}$ is semisimple, 
$$\Ext^1_\wH(\wS(E),\wS(E'))_{{\mathbb Q(t)}}\cong\Ext^1_{\sH_{{\mathbb Q}(t)}}(S(E)_{{\mathbb Q}(t)}, S(E')_{{\mathbb Q}(t)})=0.$$
In other words, if it is nonzero, $\Ext^1_{\wH}(\wS(E),\wS(E'))$ is a torsion module, so that
the map $\Ext^1_\wH(\wS(E),\wS(E'))\overset\pi\to\Ext^1_\wH(\wS(E),\wS(E'))$ is not injective. Thus, it suffices
 to prove that when
  $f(E)\not<f(E')$, the map 
  \begin{equation}\label{surjective}
  \Hom_{\wH}(\wS(E),\wS(E'))\to
\Hom_{\wH_k}(S_k(E),S_k(E'))\end{equation}
is surjective. If $E\not\cong E'$, Proposition \ref{2.2} gives $\Hom_{\wH_k}(S_k(E), S_k(E'))=0$ implying the surjectivity of (\ref{surjective}) trivially. On the other hand, if $E\cong E'$, the proposition  gives
$\Hom_{\sH_k}(S_k(E),S_k(E'))\cong k$.  This also gives surjectivity of the map in (\ref{surjective}), since it becomes surjective 
upon restriction to $\sQ\subseteq \Hom_{\wH}(\wS(E),\wS(E'))$ (taking $E'=E$). \end{proof}

 \section{Two preliminary lemmas} 
Let $R$ be a commutative ring and let $\sC$ be an abelian $R$-category.  For $A,B\in\sC$, let
$\Ext^1_\sC(A,B)$ denote the Yoneda groups of extensions of $A$ by $B$. (We   do not
require the higher Ext-groups in this section.) Let $M,Y\in\sC$, and suppose that
$\Ext^1_\sC(M, Y)$ is generated as an $R$-module by elements $\epsilon_1,\cdots, \epsilon_m$.  Let $\chi:=\oplus_i\epsilon_i\in\Ext^1_\sC(
M^{\oplus m}, Y)$ correspond to the short exact sequence $0\to Y\to  X\to M^{\oplus m}\to 0$.

\begin{lem} \label{abstract1}
The map $\Ext^1_\sC(M, Y)\to\Ext^1_\sC(M, X)$, induced by the inclusion $ Y\to X$, is
the zero map.\end{lem}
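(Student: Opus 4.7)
The plan is to feed the short exact sequence $0\to Y\to X\to M^{\oplus m}\to 0$ into the contravariant long exact Ext-sequence obtained from $\Hom_\sC(M,-)$, and to identify the connecting homomorphism $\delta$ explicitly in terms of the $\epsilon_i$. Concretely, I will apply $\Hom_\sC(M,-)$ to the displayed sequence to obtain
\[
\Hom_\sC(M,M^{\oplus m})\xrightarrow{\;\delta\;}\Ext^1_\sC(M,Y)\xrightarrow{\;\iota_*\;}\Ext^1_\sC(M,X),
\]
where $\iota_*$ is precisely the map we want to show vanishes. By exactness, it will suffice to verify that $\delta$ is surjective.

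Next I will use the standard Yoneda description of $\delta$: for a morphism $f:M\to M^{\oplus m}$, the class $\delta(f)$ is the pullback $f^*\chi\in\Ext^1_\sC(M,Y)$ of the extension class $\chi$ represented by $0\to Y\to X\to M^{\oplus m}\to 0$. Because $\chi=\oplus_i\epsilon_i$ and pullback along morphisms is $R$-linear and compatible with direct sums, pulling $\chi$ back along the $j$th canonical injection $\iota_j:M\hookrightarrow M^{\oplus m}$ yields $\iota_j^*\chi=\epsilon_j$; more generally, for $f=\sum_j r_j\iota_j$ with $r_j\in R$, one has $\delta(f)=\sum_j r_j\epsilon_j$.

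Since by hypothesis $\epsilon_1,\dots,\epsilon_m$ generate $\Ext^1_\sC(M,Y)$ as an $R$-module, the image of $\delta$ contains all $R$-linear combinations $\sum r_j\epsilon_j$, hence is all of $\Ext^1_\sC(M,Y)$. Therefore $\delta$ is surjective, and exactness of the Ext-sequence forces $\iota_*=0$, proving the lemma.

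There is no serious obstacle here; the only point requiring care is the identification of the connecting map $\delta$ with pullback of extension classes, together with the verification that this identification respects the $R$-linear structure (so that ``generated as an $R$-module'' really translates into ``lies in the image of $\delta$'' via the morphisms $\sum r_j\iota_j$). Both facts are standard for Yoneda Ext in any abelian $R$-category, so the argument goes through with no further hypotheses on $\sC$.
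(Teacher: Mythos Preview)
Your proof is correct and follows essentially the same strategy as the paper's: apply $\Hom_\sC(M,-)$ to the defining short exact sequence, then show the connecting map $\delta:\Hom_\sC(M,M^{\oplus m})\to\Ext^1_\sC(M,Y)$ is surjective by exhibiting each generator $\epsilon_i$ in its image. The paper carries this out by writing down the explicit pullback diagram along the $i$th inclusion $j_i:M\hookrightarrow M^{\oplus m}$ and reading off $\delta(j_i)=\epsilon_i$, whereas you invoke the standard Yoneda identification $\delta(f)=f^*\chi$ directly; these are the same computation. One tiny slip: $\Hom_\sC(M,-)$ is covariant, not contravariant, but your displayed sequence is the correct one.
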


\begin{proof} Using the ``long" exact sequence of $\Ext^\bullet$, it suffices to show that the map
$\delta$ in the sequence
$$\Hom_\sC(M, X)\longrightarrow\Hom_\sC(M,M^{\oplus m})\overset\delta\longrightarrow\Ext^1_\sC(M, Y)$$
is surjective---equivalently, that each $\epsilon_i\in\Ext^1_\sC(M, Y)$ lies in the image of $\delta$. Let 
$0\to Y\to X_i\to M\to 0$ correspond to $\epsilon_i\in\Ext^1_\sC(M, Y)$. By construction, $\epsilon_i$
is the image of $\chi$ under the natural map 
$$j^*_i:\Ext^1_\sC(M^{\oplus m}, Y)\to\Ext^1_\sC(M, Y),$$ which is the pull-back of the inclusion
$j_i$ of $M$ into the $i$th summand of $M^{\oplus m}$. So there is a natural commutative diagram
$$\begin{CD}
0 @>>> Y @>>>  X @>>>M^{\oplus m} @>>> 0 \\ @. @AAA  @AAA @AA{j_i}A @. \\
0 @>>>  Y @>>>  X_i @>>> M @>>> 0
\end{CD}.$$
 There is a corresponding
commutative diagram
\begin{equation}\label{commdiagram}
\begin{CD} \Hom_\sC(M, X) @>>> \Hom_\sC(M,M^{\oplus m}) @>^\delta>>\Ext^1_\sC(M, Y) \\
@AAA @AAA @| \\
\Hom_\sC(M, X_i) @>>> \Hom_\sC(M,M)  @>^{\delta_i}>> \Ext^1_\sC(M, Y)  \end{CD}\end{equation}
where each row is part of a ``long" exact sequence. Then $\delta_i(1_{M})=\epsilon_i$.
Therefore, the commutativity of the right-hand square in (\ref{commdiagram}) immediately says that
$\epsilon_i$ lies in the image of $\delta$. \end{proof}


This lemma together with the additivity of the functor $\Ext^1_\sC$ gives immediately the following.
\begin{cor}\label{abstract2}
Maintain the set-up above. If $\Ext^1_\sC(M,M)=0$, then $\Ext^1_\sC(M, X)=0$.
\end{cor}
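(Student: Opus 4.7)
The plan is to apply the long exact sequence of $\Ext^\bullet_\sC(M,-)$ to the short exact sequence
$$0\to Y\to X\to M^{\oplus m}\to 0$$
defining $X$, and combine the additivity of $\Ext^1$ in the second variable with Lemma \ref{abstract1}.

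First, I would write down the relevant piece of the long exact sequence:
$$\Ext^1_\sC(M,Y)\overset{\alpha}\longrightarrow\Ext^1_\sC(M,X)\overset{\beta}\longrightarrow\Ext^1_\sC(M,M^{\oplus m}).$$
By the additivity of $\Ext^1_\sC$ in its second argument, the right-hand term is isomorphic to $\Ext^1_\sC(M,M)^{\oplus m}$, which vanishes by hypothesis. Thus $\beta=0$, and so $\Ext^1_\sC(M,X)=\ker\beta=\Imm\alpha$ by exactness.

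Now I would invoke Lemma \ref{abstract1}, which states precisely that $\alpha=0$ (the map induced by $Y\hookrightarrow X$ is the zero map, since each generator $\epsilon_i$ of $\Ext^1_\sC(M,Y)$ pulls back from $\chi$ along the inclusion $j_i:M\hookrightarrow M^{\oplus m}$, and hence lifts to a morphism into $X_i$, so is killed in $\Ext^1_\sC(M,X)$). Combining, $\Imm\alpha=0$, and therefore $\Ext^1_\sC(M,X)=0$.

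There is no real obstacle here: the corollary is essentially a direct bookkeeping exercise once Lemma \ref{abstract1} is in hand, together with the elementary fact that $\Ext^1$ commutes with finite direct sums. The only point to mention explicitly is the additivity identification $\Ext^1_\sC(M,M^{\oplus m})\cong\Ext^1_\sC(M,M)^{\oplus m}$, which is immediate from the universal property of the direct sum in an abelian category.
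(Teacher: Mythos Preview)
Your argument is correct and is exactly the approach the paper has in mind: it simply says the corollary follows immediately from Lemma \ref{abstract1} together with the additivity of $\Ext^1_\sC$. Your write-up just makes explicit the long exact sequence bookkeeping that the paper leaves to the reader.
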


Next, let $R$ be a commutative ring which is a $\sZ$-algebra and write $q=t^2\cdot1$, the image in $R$ of $t^2\in\sZ$.
For the rest of this section, we allow general parameters $c_s$, $s\in S$, in (\ref{relations}).

\begin{lem} \label{NM}
Let $\fN\subseteq\fM$ be left ideals in $\sH_R$, with each spanned by the Kazhdan--Lusztig basis elements $C'_y$ that they contain.
Let $s\in S$ be a simple reflection and assume either $\fN=0$ or that $q^{c_s}+1$ is not a zero divisor  in $R$. Suppose $0\not=x\in \fM/\fN$ satisfies 
\begin{equation}\label{*_s}
T_s\cdot x=q^{c_s}x.
\end{equation}
Then $x$ is represented in $\fM$ by an $R$-linear combination of Kazhdan--Lusztig basis elements $C'_y$ with $sy<y$.
\end{lem}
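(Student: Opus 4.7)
The plan is to lift $x$ to a representative $\sum_y r_y C'_y \in \fM$, exploit the explicit $T_s$-action on the Kazhdan--Lusztig basis, and isolate two coefficients of $(T_s - q^{c_s})x \in \fN$ to kill the ``wrong'' $r_y$'s. Partition the relevant index set into $A := \{y : sy < y\}$ and $B := \{y : sy > y\}$. From $C'_s = t^{-c_s}(T_s + 1)$ and the standard multiplication formulas for $C'_s C'_y$ (cf.\ \cite{Lus03}), one has $T_s C'_y = q^{c_s} C'_y$ for $y \in A$, and
\[
T_s C'_y \;=\; -C'_y + t^{c_s} C'_{sy} + t^{c_s}\!\!\sum_{z:\, sz<z,\, z<y}\!\!\mu^s(z,y)\, C'_z
\]
for $y \in B$, where the $\mu^s(z,y) \in \sZ$ are the usual KL coefficients and, crucially, every index on the right (the element $sy$ and all summation indices $z$) lies in $A$.

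Because $\fN$ is spanned by the $C'_y$ it contains, adjust the representative so that $r_y = 0$ whenever $C'_y \in \fN$; equivalently $C'_y \notin \fN$ for each $y$ with $r_y \neq 0$. Applying $(T_s - q^{c_s})$ annihilates the $A$-part, leaving
\[
(T_s - q^{c_s}) x \;=\; \sum_{y \in B} r_y\Bigl(-(1+q^{c_s}) C'_y + t^{c_s} C'_{sy} + t^{c_s}\!\sum \mu^s(z,y) C'_z\Bigr) \;\in\; \fN.
\]
Suppose for contradiction that $r_{y_0}\neq 0$ for some $y_0 \in B$, chosen Bruhat-maximal with that property. Two coefficients of the displayed element in the KL basis can be isolated cleanly: the coefficient of $C'_{y_0}$ equals $-(1+q^{c_s})\,r_{y_0}$ (no other $y \in B$ can contribute, since for $y \in B$ the partner $sy$ and all summation indices $z$ lie in $A$), while the coefficient of $C'_{sy_0}\in A$ equals $t^{c_s} r_{y_0}$ (potential contributions from indices $y' \in B$ with $y' > sy_0 > y_0$ vanish by the maximality of $y_0$ in Bruhat order).

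If $\fN = 0$, the displayed element is itself zero, so $t^{c_s} r_{y_0} = 0$; since $t^{c_s}$ is a unit in $R$, $r_{y_0}=0$. If instead $1 + q^{c_s}$ is not a zero divisor, the adjustment guarantees $C'_{y_0} \notin \fN$, and since every element of $\fN$ has unique KL expansion supported on basis vectors lying in $\fN$, the coefficient of $C'_{y_0}$ must vanish, forcing $(1+q^{c_s})\,r_{y_0}=0$ and hence $r_{y_0}=0$. Either way we reach a contradiction, so no $y \in B$ has $r_y \neq 0$, and $x$ is represented by $\sum_{y \in A} r_y C'_y$, as required.

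I expect the main subtlety to be that the two hypotheses play complementary roles on two different coefficients: the ``self'' coefficient on $C'_{y_0}$ gives the $(1+q^{c_s})$ relation used in the non-zero-divisor case, while the ``partner'' coefficient on $C'_{sy_0}$ gives the $t^{c_s}$ (unit) relation used when $\fN = 0$. The bookkeeping that isolates each coefficient --- namely the verification that for $y \in B$ the element $sy$ and all $z$ with $\mu^s(z,y) \neq 0$ belong to $A$ --- is the only nontrivial structural input, and must be combined with the Bruhat-maximality of $y_0$ to eliminate cross-contributions from larger $y' \in B$ in the $C'_{sy_0}$ coefficient.
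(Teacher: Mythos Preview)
Your proof is correct and follows the same strategy as the paper's: lift to a representative supported off $\fN$, use the Kazhdan--Lusztig multiplication formula for $T_sC'_y$, pick a Bruhat-maximal ``bad'' index $y_0\in B$, and read off a coefficient to reach a contradiction.

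The one genuine difference is that you isolate \emph{two} coefficients, $C'_{y_0}$ and $C'_{sy_0}$, and use them for the two alternative hypotheses separately. The paper only equates the coefficient of $[C'_w]$ (your $C'_{y_0}$), obtaining $(q^{c_s}+1)a_w=0$, and then invokes the non-zero-divisor hypothesis; as written it does not give a separate argument for the case $\fN=0$ with $q^{c_s}+1$ a zero divisor. Your use of the $C'_{sy_0}$-coefficient (which yields $t^{c_s}r_{y_0}=0$, with $t^{c_s}$ a unit) cleanly fills that case, and your maximality argument correctly rules out cross-contributions from larger $y'\in B$. So your version is in fact a bit more complete than the paper's on this point.
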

\begin{proof} Let $[m]$ denote the image in $\fM/\fN$ of $m\in\fM$. Note that $\fM,\fN$ and $\fM/\fN$ are all $R$-free, since the $C'_y$ which belong to $\fM$ (resp., $\fN$) form a basis for $\fM$ (resp., $\fN$). The $R$-module $\fM/\fN$ has a basis consisting of all $[C'_y]\neq0$ with $C'_y\in\fM$.

Write $x=\sum_{y}a_y[C'_y]$ with $a_y[C'_y]\neq0$ and $C'_y\in\fM$. Observe that, for $y\in W, s\in S$,
\begin{equation}\label{goesdown} sy<y\implies T_sC'_y=q^{c_s}C'_y.\end{equation}
 Therefore, in the above expression for $x$, it may also be assumed that  $sy>y$ for each nonzero term $a_y[C'_y]$. Let $a_w[C'_w]\neq0$ be chosen with $w$ maximal among these $y$. In general, for $sy>y$, we have
$$T_s C'_y=-C'_y+C'_{sy}+\sum_{z<y\atop sz<z}b_zC'_z$$
for various $b_z\in R$. 
Equating coefficients of $[C'_w]$ gives by (\ref{*_s}) that $(q^{c_s}+1)a_w=0$, since $C'_w$ does not appear with any coefficient in the expressions $T_sC'_y$ with $y\neq w$ and $sy>y$. Now  the hypothesis on zero divisors forces $a_w=0$, a contradiction.
\end{proof}

\begin{rem}\label{manycases} As observed in (\ref{goesdown}) above, elements $x\in\fM/\fN$ satisfying the conclusion
of Lemma \ref{NM} also satisfy its hypothesis (\ref{*_s}). Next, suppose that 
$\la\subseteq S$ and $\fL$ is any $\sH_R$-module. By Frobenius reciprocity, the $R$-module $\Hom_{\sH_R}(\sH_Rx_\la,\fL)$ identifies with the $R$-submodule $\sX\subseteq\fL$ consisting
 of all $x\in \fL$ satisfying \eqref{*_s}
$\forall s\in\la$. Suppose $\fL$ can
be realized as $\fL=\fM/\fN$, with $\fM,\fN$ as in the statement of Lemma \ref{NM}. If $q^{c_s}+1$ is invertible
in $R$ for all $s\in\lambda$, then the lemma implies that $\sX$ has an $R$-basis consisting of 
all nonzero $[C'_y]$ in $\fL$ with
$sy<y$ for all $s\in\lambda$. 

Thus, if $R'$ is an $R$-algebra, then the $R'$-module $\Hom_{\sH_{R'}}(\sH_{R'}x_\lambda,\fL_{R'})$ has essentially
the ``same basis."    This fact will be used in proving the following corollary.\end{rem}

In the result below, we allow $c_s\not=1$. In case $c_s=1$, assumption (2) is satisfied for $R=\sQ$
if and only if $e\not=2$.

\begin{cor} \label{vanishing}
Suppose $R$ is a commutative domain with fraction field $F$, and assume that $R$ is also a $\sZ$-algebra.
 Let $\la\subseteq S$. Assume that
\begin{itemize}
\item[(1)] $\sH_F$ is semisimple;
\item[(2)] $q^{c_s}+1$ is invertible in $R$, for each $s\in\la$.
\end{itemize} Then, for any dual left cell module $S_{\omega,R}$ over $R$,
$$\Ext^1_{\sH_R}(S_{\omega,R}, x_\la\sH_R)=0.$$
\end{cor}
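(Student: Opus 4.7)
The plan is to combine the base-change friendly description of Hom-spaces provided by Lemma \ref{NM} and Remark \ref{manycases} with the semisimplicity of $\sH_F$ from hypothesis (1), in order to force the vanishing of $\Ext^1_{\sH_R}(S_{\omega,R},x_\la\sH_R)$.

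The first step is to reformulate in terms of left modules. The $R$-linear anti-involution $\sigma\colon\sH_R\to\sH_R$, $T_w\mapsto T_{w^{-1}}$, fixes $x_\la$ (since $W_\la=W_\la^{-1}$) and gives a covariant, Ext-preserving equivalence between right and left $\sH_R$-modules; it also permutes the KL basis via $C'_w\mapsto C'_{w^{-1}}$. Hence it suffices to show $\Ext^1_{\sH_R}(V,\sH_R x_\la)=0$, where $V:=S_{\omega,R}^\sigma$ is a left $\sH_R$-module admitting (via $\sigma$) a realization $V=\fM/\fN$ with $\fM,\fN$ left ideals spanned by their KL basis elements, placing $V$ in the setting of Lemma \ref{NM}.

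Next, I would represent a given class in $\Ext^1$ by a short exact sequence $\xi\colon 0\to\sH_R x_\la\to\wE\to V\to 0$ of left $\sH_R$-modules. By (1), $\xi$ splits over $F$; I would lift this splitting to $R$ by constructing, for each KL-basis generator $[C'_y]\in V$ (with $sy<y$ for $s\in\la$, as prescribed by Lemma \ref{NM}), a compatible preimage in $\wE$. The eigenspace $\wE^\la:=\{e\in\wE:T_s e=q^{c_s}e,\;s\in\la\}$ coincides with $\Hom_{\sH_R}(\sH_R x_\la,\wE)$ by Remark \ref{manycases}, and (given a KL-spanned quotient realization $\wE=\fM'/\fN'$) Lemma \ref{NM} provides it with an $R$-basis consisting of KL-type elements; hypothesis (2) guarantees that this basis is preserved under base change to $F$. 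Consequently, any lift existing over $F$ has coefficients in $R$, yielding the desired section of $\xi$.

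The main obstacle I anticipate is precisely the need to realize $\wE$ (or a relevant auxiliary module extracted from it) as a KL-spanned quotient $\fM'/\fN'$ so that Lemma \ref{NM} applies to $\wE^\la$. This is likely handled by a universal-extension argument in the spirit of Corollary \ref{abstract2}, which packages $\Ext^1$ classes into a canonical representative amenable to KL-basis analysis; once this technicality is dispatched, the base-change compatibility of the resulting KL-basis description collides with the $F$-level vanishing from (1) to force the Ext class over $R$ to be zero.
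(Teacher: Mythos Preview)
Your proposal has a genuine gap at exactly the point you flag: there is no reason the extension module $\wE$ should be realizable as a quotient $\fM'/\fN'$ of left ideals spanned by Kazhdan--Lusztig basis elements, so Lemma~\ref{NM} cannot be applied to $\wE^\la$. The suggestion that a ``universal-extension argument in the spirit of Corollary~\ref{abstract2}'' fixes this does not work; that corollary concerns $\Ext^1$-vanishing for a specific pushout, and says nothing about KL-basis structure of an arbitrary extension. Without such a structure on $\wE$, the assertion that ``any lift existing over $F$ has coefficients in $R$'' has no justification.

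The paper avoids this obstacle by never analyzing the extension module. Instead of the anti-involution, it uses $R$-linear duality $(-)^*=\Hom_R(-,R)$, which sends $x_\la\sH_R$ to $\sH_R x_\la$ and $S_{\omega,R}$ to the left cell module $\sL=S_R(\omega)$. This swaps the two arguments, so one is reduced to studying $\Hom_{\sH_R}(\sH_R x_\la,\sL)$. Now Lemma~\ref{NM} applies directly, because the second argument $\sL$ is by definition a KL-spanned quotient $\sH_R^{\leq_L\omega}/\sH_R^{<_L\omega}$. The semisimplicity hypothesis (1) enters via \cite[Lem.~(1.2.13)]{DPS98a}, which reduces $\Ext^1$-vanishing to surjectivity of the base-change map
\[
\Hom_{\sH_R}(\sH_R x_\la,\sL)\longrightarrow\Hom_{\sH_{R'}}(\sH_{R'}x_\la,\sL_{R'})
\]
for each $R'=R/\langle d\rangle$; and this surjectivity is immediate from Remark~\ref{manycases}, since both sides have the ``same'' KL-type basis. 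The point is that Lemma~\ref{NM} is applied only to cell modules, never to an auxiliary extension.
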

\begin{proof} Put $\sS:=S_{\omega,R}$. Using condition (1) and  \cite[Lem. (1.2.13)]{DPS98a}, it  suffices to prove, for each $R'
=R/\langle d\rangle$ ($d\in R$), that the map
$$\Hom_{\sH_R}(\sS,x_\la\sH_R)\longrightarrow\Hom_{\sH_{R'}}(\sS_{R'}, x_\la \sH_{R'})$$
is surjective. Here $\sS_{R'}=\sS\otimes_{R}R'$.

By \cite[Lem.~2.1.9]{DPS98a}, the left $\sH_R$-module $(x_\la\sH_R)^*:=\Hom_R(x_\la\sH_R,R)$ is naturally isomorphic to $\sH_R x_\la$. By hypothesis, $\sS=\sL^*$ is the dual of a left cell module $\sL$, $R$-free by definition. Thus, $\sL\cong\sS^*$; also, $(\sH_R x_\lambda)^*\cong x_\la\sH_R$.
There are similar isomorphisms for analogous $R'$-modules (for which we use the same notation $(-)^*$). The functor $(-)^*$ provides a contravariant equivalence from the category of finitely generated $R$-free left $\sH_R$-modules and the corresponding right $\sH_R$-module category. A similar statement holds with $R$ replaced by $R'$. Finally, there is a natural isomorphism $(-)^*\otimes_R R'\overset\sim\to (-\otimes_R R')^*$.

Consequently, it is sufficient to prove that 
$$\Hom_{\sH_R}(\sH_R x_\la,\sL)\longrightarrow\Hom_{\sH_{R'}}(\sH_{R'} x_\la,\sL_{R'})$$
is surjective. (Here $\sL_{R'}$ denotes the left cell module in $\sH_{R'}$ defined by the same left cell 
as $\sL$ for $\sH$.) However, viewing $\sL$ and $\sL_{R'}$ as cell modules (over $\sH_R$ and $\sH_{R'}$, respectively), 
hypothesis (2), Lemma~\ref{NM}, and Remark \ref{manycases} give the ``same basis'' (over $R$ and $R'$, respectively).
\end{proof}

\section{The construction of $\wX_\omega$ and the main theorem}

In this section, we prove the main result of the paper (Theorem \ref{4.4}). 

Let $\sQ$ be as in (\ref{modular system}). Recall that $\wsH$ denotes the
$\sQ$-algebra ${\mathcal H}\otimes_\sZ \sQ$.  In general, modules for $\wsH$ are decorated with a ``tilde" (e.g.,
$\wX$). In particular, we recall from (\ref{tildenotation}) the notations $\wS(\omega)$ and $\wS_\omega$.

\subsection{Preliminaries}

Consider a left cell $\omega$ and let $\sJ_\omega=\sum_{y\in\omega}{\mathbb Z}j_y$. Then (\ref{gamma}) implies that $\sJ_\omega$ is a left $\sJ$-module. Using the momomorphism $\varpi$ in \S3, form the left $\sH$-module
$\varpi^*(\sJ_\omega\otimes\sZ)$, the restriction of the $\sJ_\sZ$-module $\sJ_\omega\otimes\sZ$ to $\sH$.

\begin{lem}\label{Lemma5.1} There is an $\sH$-module isomorphism 
$$\sigma: \varpi^*(\sJ_\omega\otimes\sZ)\longrightarrow S(\omega):=\sH^{\leq_L\omega}/\sH^{<_L\omega}$$ induced by the map $\sigma: \sJ_\sZ\to \sH,\;j_y\mapsto C'_y$. In particular, $\wS(\omega)$ is a direct sum of modules $\wS(E)$ for some $E\in\text{\rm Irr}(\mathbb QW)$.
\end{lem}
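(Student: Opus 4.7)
The plan is to define $\sigma$ as a $\sZ$-linear isomorphism on the level of bases and then check $\sH$-equivariance by unwinding both sides to a structure-constant identity that is built into Lusztig's construction of $\varpi$; the decomposition statement will then follow by base-changing to $\sQ$ and exploiting the semisimplicity of $\mathbb{Q}W$.

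First I would observe that both modules are free $\sZ$-modules naturally indexed by $\omega$: the module $\sJ_\omega\otimes\sZ$ by the elements $j_y\otimes 1$ ($y\in\omega$), and the cell module $S(\omega)=\sH^{\leq_L\omega}/\sH^{<_L\omega}$ by the classes $\overline{C'_y}$ ($y\in\omega$). Hence the assignment $\sigma(j_y\otimes 1):=\overline{C'_y}$ extends uniquely to a $\sZ$-module isomorphism. The substance of the proof is $\sH$-equivariance of $\sigma$. For $w\in W$ and $y\in\omega$, unwinding \eqref{themap} and the multiplication rule in $\sJ$ gives
$$\varpi(C'_w)\cdot j_y \;=\; \sum_{u}\Bigl(\sum_{\substack{z,\,d\in\mathbf{D}\\ a(d)=a(z)}} h_{w,d,z}\,\gamma_{z,y,u^{-1}}\Bigr)\, j_u,$$
and by \eqref{gamma} only $u\in\omega$ contribute. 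On the other side, $C'_w\cdot\overline{C'_y}=\sum_u h_{w,y,u}\overline{C'_u}$ in $S(\omega)$, where \eqref{hxyz} together with the quotient by $\sH^{<_L\omega}$ keeps only the $u\in\omega$ terms. Thus $\sH$-equivariance of $\sigma$ reduces to an identity of the form
$$h_{w,y,u} \;=\; \sum_{\substack{z,\,d\in\mathbf{D}\\ a(d)=a(z)}} h_{w,d,z}\,\gamma_{z,y,u^{-1}}\qquad(w\in W,\; y,u\in\omega),$$
which encodes precisely the fact that $\varpi$ transports the pulled-back $\sH$-action on $\sJ_\omega\otimes\sZ$ to the cell-module action on $S(\omega)$; this is established by the computations in \cite[\S18]{Lus03}. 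As a fallback, one can instead check the identity on the algebra generators $C'_s$ ($s\in S$), using the explicit Kazhdan--Lusztig multiplication formulas for $C'_s C'_y$, and then bootstrap by induction on length.

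For the second assertion, I would base-change the just-constructed isomorphism from $\sZ$ to $\sQ$, obtaining an $\wsH$-module isomorphism $\wS(\omega)\cong\varpi_\sQ^*(\sJ_\omega\otimes\sQ)$. By \eqref{map3}, $\sJ_\mathbb{Q}\cong\mathbb{Q}W$ is split semisimple, so the $\sJ_\mathbb{Q}$-module $\sJ_\omega\otimes_\mathbb{Z}\mathbb{Q}$ decomposes as $\bigoplus_{E\in\irr(\mathbb{Q}W)}E^{n_E(\omega)}$ for suitable nonnegative multiplicities $n_E(\omega)$. Flatly tensoring with $\sQ$ over $\mathbb{Q}$ and pulling back via $\varpi_\sQ$ then yields
$$\wS(\omega)\;\cong\;\bigoplus_{E\in\irr(\mathbb{Q}W)}\wS(E)^{n_E(\omega)},$$
as required. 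The expected main obstacle is the structure-constant identity used for $\sH$-equivariance of $\sigma$; once that is secured via Lusztig's framework, everything else is routine linear algebra together with semisimplicity of $\mathbb{Q}W$.
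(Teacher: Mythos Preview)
Your proposal is correct and follows essentially the same approach as the paper. The only notable difference is a shortcut the paper takes for the $\sH$-equivariance step: rather than expanding $\varpi(C'_w)\cdot j_y$ via \eqref{themap} and the $\gamma$-multiplication in $\sJ$ (giving your double sum over $z,d$) and then invoking a structure-constant identity from \cite[\S18]{Lus03}, the paper cites \cite[18.10(a)]{Lus03} directly, which already provides the simplified formula
\[
\varpi(C'_x)\, j_y \;=\; \sum_{\substack{u\\ a(u)=a(y)}} h_{x,y,u}\, j_u.
\]
With this in hand, applying $\sigma$ and observing that any $u$ with $h_{x,y,u}\neq 0$ and $u\not<_L\omega$ lies in $\omega$ (hence $a(u)=a(y)$) gives the congruence with $C'_xC'_y$ immediately. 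Your proposed fallback via the generators $C'_s$ is therefore unnecessary. Your treatment of the second assertion via base change and semisimplicity of $\mathbb{Q}W\cong\sJ_{\mathbb Q}$ matches the paper's intent.
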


\begin{proof} This is a refinement of \cite[18.10]{Lus03}. We first observe that the map $\sigma$ clearly induces a $\sZ$-module isomorphism.
  It remains to check  for $y\in\omega$ that
$$\sigma(\varpi(C'_x)j_y)\equiv C'_xC'_y\mod \sH^{<_L\omega}, \quad (x\in W)$$
The proof of \cite[18.10(a)]{Lus03}\footnote{The main ingredient is \cite[18.9(b)]{Lus03}. As previously noted, the
numbers $\widehat n_z$ may be set equal to 1.}
gives the left-hand equality in the expression
\begin{equation}\label{expression}
\sigma(\varpi(C'_x)j_y)=\sigma(\sum_{u\atop a(y)=a(u)}h_{x,y,u}j_u)=\sum_{u\atop a(y)=a(u)}h_{x,y,u}C'_u\equiv C'_xC'_y\mod \sH^{<_L\omega}.\end{equation}
The middle equality is just the definition of $\sigma$. Finally, the right-hand congruence follows from the fact that,
when $h_{x,y,u}C'_u$ is nonzero mod $\sH^{<_L \omega}$, $u$ must belong to the same left cell $\omega$
as $y$, and hence have the same $a$-value.
\end{proof}

If $W$ is of type $A$ and $\omega$ is the left cell containing the longest word $w_{0,\lambda}$ for a partition $\la$. Then $\varpi^*(\sJ_\omega\otimes\sZ)$ is isomorphic to the left cell module whose dual is the Specht module $S_\lambda$. So $\wS(E)$ above could be called a  ``dual Specht module," with $\wS(E)^*$ a ``Specht module."
The modules $\wS_\omega$ are also candidates for the name ``Specht module'' \cite[p.198]{DPS98a}.

\begin{rem}\label{AfterLemma5.1} A completely analogous result to Lemma \ref{Lemma5.1} holds if the Kazhdan-Lusztig
$C$-basis (instead of the $C'$-basis here) is used, as in \cite{GGOR03}. First, it follows from \cite[(3.2)]{Lus87a} that the
map (which we call $\tau$) $\sZ\to \sZ$, sending $t\mapsto -t$, takes the coefficients $h_{x,y,z}$ to analogous coefficients
for the $C$-basis.  Extend $\tau$ to an automorphism, still denoted $\tau$, of $\sJ_{\sZ}$, taking $j_x$ to its $C$-analogue;
we may put $\tau(j_x)=(-1)^{\ell(x)}j_x$
  Thus, any expression
$h_{x,y,z}j_z$ is sent to a $C$-basis analogue. In particular, $\varpi(C'_x)$ is sent to $\varpi (C_x)$, where the latter
$\varpi$ is taken in the $C$-basis set-up. Now it is clear from (\ref{expression}) that the analogue of Lemma 5.1 holds
in the $C$-basis set-up. Note the resulting left cell modules in $\sH$ do not depend on which canonical basis is used.
This allows an identification of the module $S(\omega)$ in Lemma \ref{Lemma5.1} with its $C$-basis counterpart. 

An analogous result holds for two-sided cells, e.g., the $\sH$-module $\varpi^*(\sJ_{{\underline c}}\otimes_{\mathbb Z}{\mathcal Z})$ in \cite[Cor. 6.4]{GGOR03} does not depend on the whether the $C'$-basis
is used (as in this paper) or the $C$-basis is used (as in \cite{GGOR03}). We do not know, however, if the base-change of
the automorphism $\tau$ to $\sJ_{{\mathbb Q}(t)}$ preserves the isomorphism types of irreducible $\sJ_{{\mathbb Q}(t)}$-modules, though
their associated two-sided cells are preserved. This leads to the ``permutation" language used in ftn. 1. In particular,
we do not know if the bijection noted below \cite[Defn. 6.1]{GGOR03} depends on the choice of $C$- or $C'$-basis set-up,
and could result in one choice leading to an identification which is a (two-sided cell preserving) permutation of the 
other.
\end{rem}

Then we have the following result.

\begin{cor}\label{Cor5.3} Assume that $e\not=2$. For left cells $\omega,\omega'$, we have
 $$\Ext^1_{\wH}(\wS_{\omega}, \wS_{\omega'})\neq0\implies f(\omega)>f(\omega').$$
 \end{cor}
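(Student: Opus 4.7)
The plan is to combine Lemma \ref{Lemma5.1} with a right-module analogue of Corollary \ref{ext1}.

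First, by Lemma \ref{Lemma5.1} there is an isomorphism $\wS(\omega)\cong\bigoplus_i\wS(E_i)$ of left $\wH$-modules, where every $E_i\in\irr(\mathbb{Q}W)$ appearing lies in the two-sided cell containing $\omega$; since $f$ is constant on two-sided cells, $f(E_i)=f(\omega)$ for all $i$. Similarly $\wS(\omega')\cong\bigoplus_j\wS(E'_j)$ with $f(E'_j)=f(\omega')$. Applying $\sQ$-duality $(-)^\ast:=\Hom_\sQ(-,\sQ)$ to both sides, which commutes with direct sums and sends $\wS(E)$ to $\wS_E$ (using that the cell modules are $\sQ$-free), yields
\[
\wS_\omega\cong\bigoplus_i\wS_{E_i}\quad\text{and}\quad\wS_{\omega'}\cong\bigoplus_j\wS_{E'_j}
\]
as right $\wH$-modules.

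Second, I would establish the right-module analogue of Corollary \ref{ext1}: for irreducibles $E,E'\in\irr(\mathbb{Q}W)$,
\[
\Ext^1_\wH(\wS_E,\wS_{E'})\neq 0 \implies f(E)>f(E').
\]
This is obtained by rerunning the proof of Corollary \ref{ext1} in the right-module category. The required input is the right-module analogue of Proposition \ref{2.2}, itself obtained by applying the $k$-duality $(-)^\vee=\Hom_k(-,k)$: since $S_k(E)^\vee\cong S_{E,k}$ and $\Hom_{\sH_k}(M,N)\cong\Hom_{\sH_k}(N^\vee,M^\vee)$ for finite-dimensional $\sH_k$-modules, Proposition \ref{2.2} transcribes into the statement that for $E\not\cong E'$, $\Hom_{\sH_k}(S_{E,k},S_{E',k})\neq 0$ forces $f(E)>f(E')$, while $\Hom_{\sH_k}(S_{E,k},S_{E,k})\cong k$. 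The long exact sequence argument applied to $0\to\wS_{E'}\overset{\pi}\to\wS_{E'}\to S_{E',k}\to 0$ and the functor $\Hom_\wH(\wS_E,-)$ then proceeds verbatim.

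Combining the two steps, by additivity of Ext,
\[
\Ext^1_\wH(\wS_\omega,\wS_{\omega'})\cong\bigoplus_{i,j}\Ext^1_\wH(\wS_{E_i},\wS_{E'_j}),
\]
so if the left-hand side is nonzero then some summand on the right is nonzero, yielding $f(E_i)>f(E'_j)$ and hence $f(\omega)>f(\omega')$. The main obstacle is verifying that the proof of Corollary \ref{ext1} transports faithfully to the right-module setting; this is essentially bookkeeping, since the semisimplicity of $\wH_{\mathbb{Q}(t)}$ still forces $\Ext^1_\wH(\wS_E,\wS_{E'})$ to be a $\sQ$-torsion module, so its nonvanishing implies that multiplication by $\pi$ has nonzero kernel on it, which in turn forces the map $\Hom_\wH(\wS_E,\wS_{E'})\to\Hom_{\sH_k}(S_{E,k},S_{E',k})$ to fail surjectivity---ruled out by the right-module analogue of Proposition \ref{2.2} whenever $f(E)\not>f(E')$.
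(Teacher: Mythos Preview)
Your argument is correct and follows essentially the same line as the paper. The only difference is where the duality is applied: the paper's one-line proof implicitly uses the isomorphism $\Ext^1_{\wH}(\wS_\omega,\wS_{\omega'})\cong\Ext^1_{\wH}(\wS(\omega'),\wS(\omega))$ (coming from $\sQ$-duality on short exact sequences of $\sQ$-free modules) and then invokes Lemma~\ref{Lemma5.1} and Corollary~\ref{ext1} directly on the left-module side, whereas you dualize at the level of Proposition~\ref{2.2} and rerun the long exact sequence argument for right modules. Your route is slightly longer but entirely sound; the shortcut is simply to observe that dualizing swaps the $\Ext^1$ arguments, so no separate right-module version of Corollary~\ref{ext1} is needed.
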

\begin{proof} By Lemma \ref{Lemma5.1} and Corollary~\ref{ext1} (which requires $e\not=2$), $\Ext^1_\wH(\wS(\omega'), \wS(\omega))\neq0$ implies $f(\omega)>f(\omega')$.
\end{proof}

For $\lambda\subseteq S$, the induced (right) $\sH$-module $ x_\lambda\sH$ (see \S1.2) has an increasing filtration
\begin{equation}\label{filtrationF}
F_\lambda^\bullet: 0= F_\lambda^0\subseteq F_\lambda^1\subseteq\cdots\subseteq F_\lambda^{m_\lambda}\end{equation}
with sections $F_\lambda^{i+1}/F_\lambda^i\cong S_{\omega_i}$. The bottom section $F_\lambda^1=
F_\lambda^1/F_\lambda^0\cong S_{\omega_1}$, where $\omega_1$ is the left cell containing the longest word
$w_{\lambda,0}$ in the parabolic subgroup $W_\lambda$. If $i>1$, then $\omega_1>_L\omega_i$. See \cite[(2.3.7)]{DPS98a}. The indexing $\omega_i$ of (some of) the left cells depends on $\lambda$, and is formally ``opposite"
(in reverse order) to that used in \cite{DPS98a}. We write $\omega_\lambda:=\omega_1$ to denote its dependence
of the latter cell on $\lambda$.

\begin{lem}\label{strict}  In the filtration (\ref{filtrationF}), if $i>1$, then  $f(\omega_i)>f(\omega_\la)$.
\end{lem}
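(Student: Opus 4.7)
The plan is to extract this directly from the strict $L$-cell comparison and the antitonicity of the sorting function $f$ that was already recorded in equation (\ref{LRorder2}). Indeed, the statement amounts to verifying that each $\omega_i$ with $i>1$ lies strictly below $\omega_\lambda=\omega_1$ in the $L$-preorder, and then invoking (\ref{LRorder2}) to flip this into a strict inequality of $f$-values.

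More concretely, I would proceed as follows. First, recall that by the cited result \cite[(2.3.7)]{DPS98a}, the left cell $\omega_1$ appearing as the bottom section of the filtration \eqref{filtrationF} is precisely $\omega_\lambda$, the left cell containing the longest element $w_{\lambda,0}$ of $W_\lambda$, and every other section $S_{\omega_i}$ for $i>1$ satisfies $\omega_1 >_L \omega_i$ (strict inequality in the $L$-preorder). Second, applying (\ref{LRorder2}) to the pair $\omega_i <_L \omega_1$ immediately yields $f(\omega_i) > f(\omega_1) = f(\omega_\lambda)$, which is the desired conclusion.

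The only subtlety is that (\ref{LRorder2}) delivers a strict inequality of $f$-values, rather than merely $f(\omega_i)\geq f(\omega_\lambda)$; this relies on the fact (recorded in the remark following (\ref{LRorder2})) that two left cells related strictly under $<_L$ cannot lie in a common two-sided cell, a consequence of \cite[Cor. 1.9(c)]{Lus87b}. I do not anticipate any real obstacle here: once this observation is in hand, the lemma is an immediate consequence of the already-established preorder refinement, and no further input from cell theory or Cherednik algebras is needed.
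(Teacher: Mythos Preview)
Your proposal is correct and matches the paper's own proof essentially verbatim: the paper simply says the lemma follows from (\ref{LRorder2}), since $\omega_1>_L\omega_i$ for all $2\leq i\leq m_\lambda$ as noted above the statement. Your additional remark about the strictness (via \cite[Cor.~1.9(c)]{Lus87b}) is exactly the justification the paper gives for (\ref{LRorder2}) itself.
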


\begin{proof} This follows from (\ref{LRorder2}), since $\omega_1>_L\omega_i$  for all $2\leq i\leq m_\lambda$ (as noted above). 
\end{proof}

\noindent 
\subsection{First construction of a module $\widetilde X_\omega.$}\label{firstconstruction}
Let $\omega\in\Omega$ be a fixed left cell. The construction of $\widetilde X_\omega$ relies on Corollary
\ref{Cor5.3}.

We iteratively construct an $\wH$-module $\wX_\omega$, filtered by dual left cell modules, such
 that $\wS_{\omega}\subseteq \wX_\omega$ is the lowest nonzero filtration term, and
$$\Ext^1_{\wsH}(\wS_{\omega'}, \wX_\omega)=0\text{ for all left cells }\omega'.$$ 
It will also be a consequence of the construction that every other filtration term $\wS_\nu$, $\nu\in\Omega$, satisfies
$f(\nu)>f(\omega)$.

For $j\in\mathbb N$,  let 
$$\Omega_j=\{\nu\in\Omega\mid f(\nu)=j\}.$$
Fix $i=f(\omega)$. Suppose $\Ext^1_{\wsH}(\wS_\tau,\wS_\omega)\neq0$ for some $\tau\in\Omega$. Then, by the Corollary \ref{Cor5.3}, $f(\tau)>f(\omega)=i$. Assume $f(\tau)=j$ is minimal with this property. Since $\sQ$ is a DVR and $\Ext^1_{\wsH}(\wS_\tau,\wS_\omega)$ is finitely generated, it follows that $\Ext^1_{\wsH}(\wS_\tau,\wS_\omega)$ is a direct sum of $m_\tau(\geq0)$ nonzero cyclic $\sQ$-modules. Let $\wY_\tau$ be the extension of $\wS_\tau^{\oplus m_\tau}$ by $\wS_\omega$,
constructed as above Lemma \ref{abstract1} (using generators for the cyclic modules). Then by Lemma \ref{abstract1}, Corollary \ref{abstract2}, and Corollary \ref{Cor5.3} we have
$$\Ext^1_{\wsH}(\wS_\tau,\wY_\tau)=0.$$

Let
$$\Omega_{j,\omega}=\{\nu\in\Omega_{j}\mid\Ext^1_{\wsH}(\wS_\nu,\wS_\omega)\neq0\}.$$
If $\nu\in\Omega_{j,\omega}\backslash\{\tau\}$, then $\Ext^1_{\wsH}(\wS_\nu,\wS_\omega)\cong\Ext^1_{\wsH}(\wS_\nu,\wY_\tau)$ by Corollary \ref{Cor5.3}, 
together with the long exact sequence
for Ext.\footnote{ We also use the fact that $f(\nu)\not=f(\tau)$ implies that $\Hom_{\sH}(\widetilde S_\nu,\widetilde S_\tau)=0$ since $\Hom_\sQ(\widetilde S_\nu,\widetilde S_\tau)$ and hence $\Hom_\sH(\widetilde S_\nu,\widetilde S_\tau)$)
are free $\sO$-modules. Thus, if $\Hom_{\sH}(\widetilde S_\nu,\widetilde S_\tau))\not=0$, then it remains nonzero
upon base change to $K$. This is impossible since $\nu$ and $\tau$ belong to different two-sided cells and $\wsH_K$ is
semisimple.}

Thus, if $\wY_{\tau,\nu}$ denotes the corresponding extension of $\wS_\nu^{\oplus m_\nu}$ by $\wY_\tau$ (again
using the construction above Lemma \ref{abstract1}), then
$$\Ext^1_{\wsH}(\wS_{\omega'},\wY_{\tau,\nu})=0\text{ for }\omega'=\tau,\nu.$$
From the general identity $\Ext^1_{\wsH}(A,C)\bigoplus\Ext^1_{\wsH}(B,C)\cong\Ext^1_{\wsH}(A\oplus B,C)$, one sees that 
$\wY_{\tau,\nu}$
 is isomorphic to the ``sum" extension of $\wS_\tau^{\oplus m_\tau}\oplus \wS_\nu^{\oplus m_\nu}$ by $\wS_\omega$.
Continuing this process, we obtain an
extension $\wY_{j}$ of $\oplus_{\tau\in\Omega_{j,\omega}}\wS_\tau^{\oplus m_\tau}$ by $\wS_\omega$, with 
 $$\Ext^1_{\wsH}(\wS_{\omega'},\wY_{j})=0\text{ for all }\omega'\in\bigcup_{\ell\leq j}\Omega_\ell .$$
Thus, $\Ext^1_{\wsH}(\wS_{\omega'},\wY_{j})\neq 0$ implies $f(\omega')>j$.

Continuing the above construction with the role of $\wS_\omega$ replaced by $\wY_{j_1}$ with $j_1=j$, 
we obtain a module $\wY_{j_1,j_2}$ such that $j_1<j_2$ and 
$$\Ext^1_{\wsH}(\wS_{\omega'},\wY_{j_1,j_2})=0\text{ for all }\omega'\in\cup_{\ell\leq j_2}\Omega_\ell.$$
Let $m$ be the maximal $f$-value. This construction will stop after a finite number $r=r(\omega)$ of steps, resulting in an $\wH$-module
$\wX_\omega:=\wY_{j_1,j_2,\cdots,j_r}$ such that 
$$f(\omega)<j_1<j_2<\cdots<j_r\leq m,\text{ and }\Ext^1_{\wsH}(\wS_{\omega'},\wX_\omega)=0\text{ for all }\omega'\in\Omega.$$

\medskip\noindent
\subsection{ A second construction of a module $\wX_\omega$.}\label{secondconstruction} The construction will generally lead to a larger module $\wX_\omega$, so is not as ``efficient" as the first construction above, in some sense. Nevertheless, the construction has similar properties, is cleaner, and has a very considerable advantage that it first builds an $\sH$-module $X_\omega$, then sets $\wX_\omega=X_{\omega,\sQ}:=
(X_\omega)_\sQ$. Both $X_\omega$ and $\widetilde X_\omega$ are built with the requirement $e\not=2$, this
condition being needed in the supporting Proposition. \ref{beforeprop}(3) below.

As before, $\Omega$ denotes the set of all left cells of $W$, and $\Omega_i=\{\omega\in\Omega\,|\,f(\omega)=i\},$
$i\in\mathbb N$. 

Fix $\omega\in\Omega$, and put $i_0=f(\omega)$. For each $i\in\mathbb Z$, put $X_{\omega,i}=0$ if $i<i_0$ (we use
these terms only as a notational convenience), and put $X_{\omega,i_0}=S_\omega$. Next, we give a recursive
defintion of $X_{\omega,j}$ for all $j\geq i_0$, with the case $j=i_0$ just given. If $X_{\omega,j}$ has been defined,
define $X_{\omega,j+1}$ as follows: Let $M$ denote the direct sum (possibly zero) of all $\sH$-modules $S_\tau$ with $f(\tau)=j+1$. Using the category $\sH$-mod for $\sC$ in the construction above Lemma \ref{abstract1}, and $Y=X_{\omega,j}$,
put $X_{\omega,j+1}=X$ in that construction (making some choice for the generators $\Ext^1_\sH(M,Y)$ that are
used). For $j$ sufficiently large, we have $\Omega_i=0$ for all $i>j$, and so $X_{\omega,i}=X_{\omega,j}$. Thus,
we set $X_\omega:=X_{\omega,j}$ for any such sufficiently large $j$.

\begin{prop}\label{beforeprop} The $\sH$-module $X_\omega$ and the increasing
 filtration $\{X_{\omega,i}\}_{i\in\mathbb Z}$ constructed above have the following properties:
\begin{itemize} \item[(1)] The smallest index of a nonzero section $X_{\omega,i}/X_{\omega,i-1}$
is $i=f(\omega)=i_0$, and the section is $S_\omega$ in that case.
\item[(2)] All sections $X_{\omega,i}/X_{\omega,i-1}$ are direct sums of modules $S_\tau$, $\tau\in\Omega$, with
varying multiplicities (possibly 0), and with $f(\tau)=i$.
\item[(3)] If $e\not=2$, $\Ext^1_{\wsH}(S_{\nu,\sQ}, X_{\omega,\sQ})=0$ for all $\nu,\omega\in\Omega$.
\end{itemize}
\end{prop}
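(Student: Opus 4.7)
My plan is to dispatch parts (1) and (2) directly from the inductive definition, reserving the real work for part (3). Part (1) is automatic: $X_{\omega,i_0}=S_\omega$ and $X_{\omega,i}=0$ for $i<i_0$ by definition. Part (2) is equally immediate: at each recursive step the quotient $X_{\omega,j+1}/X_{\omega,j}$ is, by the construction above Lemma \ref{abstract1}, a direct sum of $m_j$ copies of $M_j:=\bigoplus_{f(\tau)=j+1}S_\tau$, hence a sum of cell modules all of whose $f$-value equals $j+1$.

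For (3) I would induct on $j\ge i_0$, proving the sharper statement
\[
(*_j)\qquad \Ext^1_{\wsH}(\wS_\nu,X_{\omega,j,\sQ})=0\quad\text{for every } \nu\in\Omega\text{ with }f(\nu)\le j.
\]
Because $f$ takes only finitely many values, $X_{\omega,j,\sQ}=X_{\omega,\sQ}$ once $j$ is large enough, and $(*_j)$ then gives (3). The base case $j=i_0$ reads $\Ext^1_{\wsH}(\wS_\nu,\wS_\omega)=0$ for $f(\nu)\le f(\omega)$, which follows immediately from Corollary \ref{Cor5.3}; this is the one place in the entire argument that uses $e\ne 2$.

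For the inductive step $(*_j)\Rightarrow(*_{j+1})$, I would apply $\Hom_{\wsH}(\wS_\nu,-)$ to the base-changed short exact sequence
\[
0\to X_{\omega,j,\sQ}\to X_{\omega,j+1,\sQ}\to M_{j,\sQ}^{\oplus m_j}\to 0
\]
and examine the resulting long exact sequence in two cases. For $f(\nu)\le j$, Corollary \ref{Cor5.3} kills the third $\Ext^1$-term and the induction hypothesis kills the first, so the middle $\Ext^1$ vanishes. For $f(\nu)=j+1$, the third term still vanishes (equality in $f$ is not allowed by Corollary \ref{Cor5.3}), so the sequence collapses to a surjection $\Ext^1_{\wsH}(\wS_\nu, X_{\omega,j,\sQ})\twoheadrightarrow\Ext^1_{\wsH}(\wS_\nu, X_{\omega,j+1,\sQ})$; it therefore suffices to show this map is itself zero. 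Because $\wS_\nu$ is a direct summand of $M_{j,\sQ}$, this reduces, by functoriality in the first variable, to showing that $\Ext^1_{\wsH}(M_{j,\sQ}, X_{\omega,j,\sQ})\to\Ext^1_{\wsH}(M_{j,\sQ}, X_{\omega,j+1,\sQ})$ vanishes. That is exactly the conclusion of Lemma \ref{abstract1} applied in $\wsH$-mod over $R=\sQ$, provided the base-changed classes $(\epsilon_i)_\sQ$ of the chosen $\sZ$-generators of $\Ext^1_\sH(M_j, X_{\omega,j})$ generate $\Ext^1_{\wsH}(M_{j,\sQ}, X_{\omega,j,\sQ})$ as a $\sQ$-module.

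The main technical obstacle is this last generation property, which I would handle by flat base change for Ext. Since $\sZ\to\sQ$ is flat, $\sH$ is Noetherian (being free of finite rank over the Noetherian ring $\sZ$), and $M_j$ is finitely generated over $\sH$, $M_j$ admits a resolution by finitely generated projective $\sH$-modules. Tensoring with $\sQ$ commutes with $\Hom_\sH(-,X_{\omega,j})$ on this resolution, yielding the isomorphism
\[
\Ext^1_\sH(M_j, X_{\omega,j})\otimes_\sZ\sQ\;\cong\;\Ext^1_{\wsH}(M_{j,\sQ}, X_{\omega,j,\sQ}).
\]
This simultaneously confirms that $\Ext^1_\sH(M_j,X_{\omega,j})$ is finitely generated over $\sZ$ (so that $m_j<\infty$ and the construction is genuinely well-defined at every step) and that the $(\epsilon_i)_\sQ$ generate the target, which is exactly what Lemma \ref{abstract1} requires to close the induction.
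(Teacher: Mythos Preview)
Your proof is correct and uses the same key ingredients as the paper's (Lemma~\ref{abstract1}, Corollary~\ref{Cor5.3}, and flat base change $\sZ\to\sQ$), but the organization differs. You run an induction on $j$, establishing $(*_j)$ at every level from $i_0$ upward. The paper instead fixes $\nu$, sets $j=f(\nu)-1$ directly, and argues in a single step: Lemma~\ref{abstract1} applied over $\sZ$ gives that $\Ext^1_\sH(M,X_{\omega,j})\to\Ext^1_\sH(M,X_{\omega,j+1})$ is zero, flat base change transports this zero map to $\sQ$, and then Corollary~\ref{Cor5.3} (giving $\Ext^1_{\wsH}(M_\sQ,M_\sQ)=0$) plus the long exact sequence yields $\Ext^1_{\wsH}(M_\sQ,X_{\omega,j+1,\sQ})=0$; the quotient $X_\omega/X_{\omega,j+1}$ is then handled section by section via Corollary~\ref{Cor5.3} alone. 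This avoids ever needing vanishing at levels below $j$, so no induction is required. Your approach buys a cleaner invariant statement $(*_j)$ at the cost of extra steps; the paper's is shorter but requires spotting that only the single level $j=f(\nu)-1$ matters. Note also that the paper base-changes the \emph{conclusion} of Lemma~\ref{abstract1} (the zero map) rather than its \emph{hypotheses} (the generators), which sidesteps having to verify that the $(\epsilon_i)_\sQ$ generate---though your flat base change argument for that is of course fine.

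One small inaccuracy: your remark that the base case ``is the one place in the entire argument that uses $e\ne 2$'' is not quite right, since Corollary~\ref{Cor5.3} (which requires $e\ne 2$) is invoked repeatedly in the inductive step as well.
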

\begin{proof}Properties (1) and (2) are immediate from the construction of $X_\omega$.

To prove (3), fix $\nu$ and $\omega\in\Omega$. We will apply Corollary \ref{Cor5.3} several times. First, it shows
the vanishing in (3) holds section by section of $X_{\omega,\sQ}$, unless $f(\nu)>f(\omega)$. So assume that
$f(\nu)>f(\omega)$.

Put $j=f(\nu)-1$ and let $M$ be the $\sH$-module used above in the construction of $X_{\omega,j+1}$ from $Y=X_{\omega,j}$. Lemma \ref{abstract1} implies the map $\Ext^1_\sH(M,Y)\to\Ext^1_\sH(M,X_{\omega,j+1})$ is the zero
map. Applying the flat base change from $\sZ$ to $\sQ$, we find that the map
$\Ext^1_{\wsH}(M_\sQ,Y_\sQ)\to\Ext^1_{\wsH}(M_\sQ,X_\sQ)$ is zero, with $X=X_{\omega,j+1}$. However,
Corollary \ref{Cor5.3} implies $\Ext^1_{\wsH}(M_\sQ,M_\sQ)=0$. Now the long exact sequence argument of
Corollary \ref{abstract2} shows that $\Ext^1_{\wsH}(M_\sQ,X_\sQ)=0$. Since $S_\nu$ is a direct summand of
$M$ (by construction, since $f(\nu)=j+1$), it follows that $\Ext^1_{\wsH}(S_{\nu,\sQ},X_\sQ)=0$. 

However, $X_\omega/X_{\omega,j+1}$ is filtered by modules $S_\tau$ with $f(\tau)>j+1=f(\nu)$.   So
$$\Ext^1_{\wsH}(S_{\nu,\sQ},(X_{\omega}/X_{\omega,j+1})_\sQ)=0 $$
by Corollary \ref{abstract2} again. Together
with the conclusion of the previous paragraph, this gives the required vanishing $\Ext^1_{\wsH}(S_{\nu,\sQ},X_{\omega,\sQ})=0$.
\end{proof}

To complete the second construction, set $\widetilde X_\omega=X_{\omega,\sQ}$.

\subsection{The main result}

Let $\Omega'$ be the set of all left cells that do not contain the longest element of a parabolic subgroup. Put
$$\wT=\bigoplus_{\lambda \subseteq S}x_\lambda\wH\quad\text{ and }\quad\widetilde\sX=\bigoplus_{\omega\in\Omega'}\wX_\omega.$$
Here and in the theorem below, objects (modules, algebras, etc.) are decorated with a tilde $\sim$ because they
are taken over the algebra $\sQ$ in (\ref{modular system}).

We are now ready to prove the following main result of the paper. 

\begin{thm} \label{4.4}Assume that $e\not=2$. Let $\wT^+=\wT\oplus \widetilde\sX$, $\wsA^+=\End_{\wH}(\wT^+)$ and $\wDelta(\omega)=\Hom_{\wH}(\wS_{\omega},\wT^+)$ for $\omega\in\Omega$. Then $\{\wDelta(\omega)\}_{\omega\in\Omega}$ is a strict stratifying system for the category $\wsA^+$-{ mod} with respect to the quasi-poset $(\Omega,\leq_f)$.
\end{thm}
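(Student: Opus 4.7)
The plan is to verify the three hypotheses of Theorem \ref{strattheorem} for $B=\wH$, $T=\wT^+$, and an appropriate $\Omega$-indexed direct sum decomposition of $\wT^+$. Since $\lambda\mapsto\omega_\lambda$ (the left cell of $w_{\lambda,0}$) is injective onto $\Omega\setminus\Omega'$, I set $T_\omega:=x_\lambda\wH$ when $\omega=\omega_\lambda$ and $T_\omega:=\wX_\omega$ when $\omega\in\Omega'$, yielding $\wT^+=\bigoplus_{\omega\in\Omega}T_\omega$. For $T_{\omega_\lambda}=x_\lambda\wH$ I equip it with the natural filtration \eqref{filtrationF} (bottom section $\wS_{\omega_\lambda}$, higher sections $\wS_{\omega_i}$ with $f(\omega_i)>f(\omega_\lambda)$ by Lemma \ref{strict}). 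For $T_\omega=\wX_\omega$ with $\omega\in\Omega'$ I use the filtration $\{X_{\omega,i}\}_\sQ$ from Proposition \ref{beforeprop}, refined so that each section is a single $\wS_\nu$; the bottom is $\wS_\omega$ and higher sections satisfy $f(\nu)>f(\omega)$. In both cases, distinct $f$-values force membership in distinct two-sided cells, so every higher section $\wS_\nu$ satisfies $\bar\nu>\bar\omega$ in $\overline\Omega$. This gives condition (1) of Theorem \ref{strattheorem}.

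For condition (2), suppose $\Hom_\wH(\wS_\mu,T_\omega)\neq 0$. A standard argument using the filtration produces a section $\wS_\nu$ of $T_\omega$ with $\Hom_\wH(\wS_\mu,\wS_\nu)\neq 0$, which by $\sQ$-duality of finitely generated $\sQ$-free $\wH$-modules is equivalent to $\Hom_\wH(\wS(\nu),\wS(\mu))\neq 0$. Decomposing each $\wS(\omega')$ via Lemma \ref{Lemma5.1} into irreducible summands $\wS(E)$, and using that $\wH_K$ is semisimple so that $\Hom_{\wH_K}(\wS_K(E),\wS_K(E'))\neq 0$ forces $E\cong E'$, torsion-freeness over the DVR $\sQ$ promotes this dichotomy to the integral statement. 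Since any irreducible $E$ belongs to a unique two-sided cell, $\mu$ and $\nu$ must lie in the same two-sided cell, so $\bar\mu=\bar\nu\geq\bar\omega$, i.e., $\omega\leq_f\mu$.

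For condition (3), since $T_\omega/F^i_\omega$ is filtered by dual cell modules, the long exact sequence of $\Ext$ reduces matters to showing $\Ext^1_\wH(\wS_\nu,\wT^+)=0$ for every $\nu\in\Omega$. Splitting $\wT^+=\wT\oplus\widetilde\sX$: the vanishing $\Ext^1_\wH(\wS_\nu,\wX_{\omega'})=0$ is supplied directly by Proposition \ref{beforeprop}(3), while $\Ext^1_\wH(\wS_\nu,x_\lambda\wH)=0$ follows from Corollary \ref{vanishing} once one verifies that $q^{c_s}+1=t^2+1=\Phi_4(t)$ is invertible in $\sQ$. The latter is equivalent to $\Phi_{2e}(t)\neq\Phi_4(t)$, which is precisely the hypothesis $e\neq 2$; this is where that hypothesis enters a second time (it also entered inside Proposition \ref{beforeprop}(3) via Corollary \ref{Cor5.3}).

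Finally, each $\wDelta(\omega)=\Hom_\wH(\wS_\omega,\wT^+)$ is $\sQ$-torsion-free and hence projective over the DVR $\sQ$, so Theorem \ref{strattheorem} applies and produces the claimed strict stratifying system on $(\Omega,\leq_f)$. The main obstacle is really conceptual bookkeeping: setting up the decomposition of $\wT^+$ bijectively indexed by $\Omega$ via the map $\lambda\mapsto\omega_\lambda$, and arranging the Hom-analysis in condition (2) so that it reduces cleanly (via Lemma \ref{Lemma5.1}, semisimplicity of $\wH_K$, and torsion-freeness) to the irreducible-level statement; once this is done, the Ext-vanishing in condition (3) follows from Proposition \ref{beforeprop}(3) and Corollary \ref{vanishing} as soon as the hypothesis $e\neq 2$ is verified to guarantee invertibility of $\Phi_4(t)$ in $\sQ$.
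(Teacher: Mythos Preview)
Your proof is correct and follows essentially the same route as the paper: set up an $\Omega$-indexed decomposition $\wT^+=\bigoplus_\omega T_\omega$, equip each summand with a dual-cell filtration having $\wS_\omega$ at the bottom and strictly larger $f$-value sections above, then verify the three hypotheses of Theorem~\ref{strattheorem}. The one noteworthy difference is that you build $\wX_\omega$ via the second construction (\S\ref{secondconstruction}, Proposition~\ref{beforeprop}), whereas the paper's written proof uses the first construction (\S\ref{firstconstruction}) and only remarks that the second works ``with slight adjustments, left to the reader''; you have in effect supplied those adjustments. You are also a bit more careful than the paper in two places: you note explicitly that $\lambda\mapsto\omega_\lambda$ is a bijection onto $\Omega\setminus\Omega'$ (which follows since elements of a common left cell share a right descent set and $w_{\lambda,0}$ has right descent set exactly $\lambda$), and you check the $\sQ$-projectivity of $\wDelta(\omega)$ needed to invoke Theorem~\ref{strattheorem}.
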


\begin{proof} For each left cell $\omega$, put $\wwT_\omega=x_\la\wH$ if $\omega$ contains the longest element $w_{\lambda,0}$ of $W_\la$, where $\la\subseteq S$. If there is no such $\la$ for $\omega$, put $\wwT_\omega=\wX_\omega$ as constructed in \S5.2. (One can use the $\wX_\omega$ from \S5.3 with slight adjustments, left to the
reader.) In the first case, $\wwT_\omega$ has a filtration by dual left cell modules, and $\wS_\omega$ appears at the bottom. Moreover, $f(\omega)<f(\omega')$ for any other filtration section $\wS_{\omega'}$, by Lemma~\ref{strict}. This same property holds also in the case $\wwT_\omega=\wX_\omega$ by construction.

Put $\wwT=\oplus_\omega\wwT_\omega$ and note $\wT^+=\wwT$. We will apply Theorem \ref{strattheorem} to $\wwT$ and the various $\wwT_\omega$, where $\wsH$ plays the role of the algebra $B$ there, $\sQ$ plays the role of
$R$ there, $\wS_\omega$ is $S_\lambda$, etc. 
We are required to the check three conditions (1), (2), (3) in Theorem \ref{strattheorem}.  The construction in
\S5.2 of dual left cell filtrations of the various $\wwT_\omega$ is precisely what is required for the verification of (1).

Condition (2) translates directly to the requirement
$$\Hom_\wH(\wS_\mu,\wwT_\omega)\neq0\implies \omega\leq_f\mu$$
for given $\mu,\omega$. However, if $\Hom_\wH(\wS_\mu,\wwT_\omega)\not=0$, there must be a nonzero $\Hom_\wH(\wS_\mu,\wS_{\omega'})$ for some filtration section $\wS_{\omega'}$ of $\wwT_\omega$. In particular, $f(\omega')\geq f(\omega)$. Also,
$(\wS_\mu)_K$ and $(\wS_{\omega'})_K$ must have a common irreducible constituent, forcing the two-sided cells containing $\mu$ and $\omega'$ to agree. This gives $f(\mu)=f(\omega')\geq f(\omega)$; so (2) holds.

Finally, 
\begin{equation}\label{Finally}\Ext_\wH^1(\wS_\mu,\wwT_\omega)=0\text{ for all }\mu,\omega.\end{equation}
This follows from the construction \S5.2 for $\wwT_\omega=\wX_\omega$ and by Corollary  \ref{vanishing} in case $\wwT_\omega =x_\la\wH$. The conclusion of Theorem \ref{strattheorem} now immediately gives the theorem we are proving here.
\end{proof}

\section{Identification of $\wsA^+=\End_{\wH}(\wT^+)$.}

The constructions in \S\S5.2, 5.3 of the modules $\wX_\omega$ in the previous section work just
 as well using the modules $\wS_E:=\wS(E)^*$ for $E\in\irr({\mathbb Q}W)$ defined in (\ref{Spechtdual}) to replace the dual left cell modules $\widetilde S_\omega$. This results in right $\sH$-modules $\wX_E$. As in the case of $\wX_\omega$, we have the following property, with the same proof. In the statement of the following proposition, $\wX_E$ can be defined using either of the two constructions.

\begin{prop} Assume that $e\not=2$. Then $\Ext_{\wH}^1(\wS_{E'},\wX_E)=0$ for all $E,E'\in\text{\rm Irr}({\mathbb Q}W)$. 
\end{prop}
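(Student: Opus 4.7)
The plan is to mirror the proof of Proposition~\ref{beforeprop}(3) essentially verbatim, with the dual Specht modules $\wS_E = \wS(E)^*$ playing the role of the dual left cell modules $\wS_\omega$ throughout. As noted in the preamble to this proposition, the construction of \S5.3 (or equally of \S5.2) carries over with $\wS_E$ in place of $\wS_\omega$: set $X_{E,i} = 0$ for $i < i_0 := f(E)$, put $X_{E,i_0} := \wS_E$, and then recursively define $X_{E,j+1}$ from $X_{E,j}$ via the Yoneda extension construction above Lemma~\ref{abstract1}, using a direct sum $M$ of modules $\wS_{E''}$ with $f(E'') = j+1$ whose extension classes generate $\Ext^1_\sH(M, X_{E,j})$. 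For $j$ sufficiently large this stabilizes to $X_E$, and one sets $\wX_E := (X_E)_\sQ$.

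The only new ingredient beyond the formal construction is the dual-Specht analogue of Corollary~\ref{ext1}:
\begin{equation*}
\Ext^1_{\wH}(\wS_E, \wS_{E'}) \neq 0 \implies f(E) > f(E').
\end{equation*}
This is the $E$-level refinement of Corollary~\ref{Cor5.3}. One way to obtain it is via the standard duality $M \mapsto M^* = \Hom_\sQ(M, \sQ)$, using the antiautomorphism $T_w \mapsto T_{w^{-1}}$ of $\wH$, which interchanges left and right module categories, preserves irreducible modules (hence their two-sided cell labels and $f$-values), and reverses the two arguments of $\Ext^1$. Alternatively, and perhaps more transparently, one can run the long exact sequence argument of Corollary~\ref{ext1} verbatim on the short exact sequence $0 \to \wS_E \overset{\pi}{\to} \wS_E \to S_{E,k} \to 0$, reducing the question to a dual-Specht version of Proposition~\ref{2.2}. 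Either route uses only already-established tools.

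With the displayed implication in hand, the argument of Proposition~\ref{beforeprop}(3) transplants word-for-word. For $E'$ with $f(E') \leq f(E)$, the implication together with the filtration of $\wX_E$ by $\wS_{E''}$'s with $f(E'') \geq f(E)$ forces $\Ext^1_{\wH}(\wS_{E'}, \wX_E) = 0$ section-by-section via Corollary~\ref{abstract2}. For $f(E') > f(E)$, set $j = f(E') - 1$ and let $M$ be the module used to pass from $Y = X_{E,j}$ to $X = X_{E,j+1}$; Lemma~\ref{abstract1} makes the pullback $\Ext^1_\sH(M, Y) \to \Ext^1_\sH(M, X)$ zero, and the dual-Specht implication combined with Corollary~\ref{abstract2} gives $\Ext^1_{\wH}(M_\sQ, X_\sQ) = 0$ after flat base change to $\sQ$. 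Since $\wS_{E'}$ is a direct summand of $M_\sQ$, this yields $\Ext^1_{\wH}(\wS_{E'}, (X_{E,j+1})_\sQ) = 0$; a final application of Corollary~\ref{abstract2} to the further quotient $\wX_E / (X_{E,j+1})_\sQ$, whose sections are $\wS_{E''}$ with $f(E'') > f(E')$, completes the argument. The main (mild) obstacle is pinning down the dual-Specht refinement cleanly; once that is done, everything else is mechanical.
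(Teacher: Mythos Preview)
Your proposal is correct and is exactly what the paper intends: it simply says ``with the same proof'' as Proposition~\ref{beforeprop}(3), and you have spelled out what that means. The one remark I would add is that the ``dual-Specht analogue of Corollary~\ref{Cor5.3}'' you isolate is not so much a refinement as it is the direct dual of Corollary~\ref{ext1} itself (no decomposition via Lemma~\ref{Lemma5.1} is needed, since Corollary~\ref{ext1} is already stated at the level of individual $E$'s); once you dualize to pass from left $\wH$-modules $\wS(E)$ to right $\wH$-modules $\wS_E$, the implication $\Ext^1_{\wH}(\wS_E,\wS_{E'})\neq 0 \Rightarrow f(E)>f(E')$ falls out immediately.
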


If we use the first construction given in \S5.2,  the modules $\wX_E$ have strong indecomposability properties, which the modules $\wX_\omega$, $\omega\in\Omega$ generally do not have with either construction.
In the following proposition, we assume that $\wX_E$ is defined by the first construction \S5.2.  

The following result  can be argued without using RDAHAs, but it is faster to quote Rouquier's 1-faithful covering theory, especially 
\cite[Thm.~5.3]{Ro08}, which applies to our $e\neq2$ case, over $\sR$, where 
$$\sR:=(\bbC[t,t^{-1}]_{(t-\sqrt{\zeta})})^\wedge$$
 is the completion of the localization $\bbC[t,t^{-1}]_{(t-\sqrt{\zeta})}$ at the maximal ideal $(t-\sqrt{\zeta})$. Note that $\sR$ is a $\sQ$-module via the natural ring homomorphism $\sQ\to\sR$.
 Note also that the set $\irr({\mathbb Q}W)$ 
 corresponds naturally to the set $\irr(W):=\irr({\mathbb C}W)$ in \cite{Ro08}.

\begin{prop} Assume that $e\not=2$. The right $\wsH$-modules $\wX_E$ are indecomposable, as is each $\wX_E\otimes k$. The endomorphism algebras of all these modules are local with radical quotient $k$.
\end{prop}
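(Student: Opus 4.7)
The plan is to reduce the statement to an indecomposability result in the RDAHA category $\sO$ via Rouquier's $1$-faithful covering \cite[Thm.\ 5.3]{Ro08}. First I would extend scalars from $\sQ$ to $\sR$: the natural map $\sQ \to \sR$ factors as $\sQ \to \widehat{\sQ} \to \sR$, each step being a faithfully flat extension of complete DVRs (the first by completion of a DVR, the second because $\sR$ adds a compatible complex structure). Consequently, indecomposability and locality (with specified residue field) of the endomorphism ring of a finitely generated $\sQ$-module are detected after base change to $\sR$, with residue field passing from $k=\bbQ(\sqrt{\zeta})$ to $\bbC$.

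Next I would apply Rouquier's theorem (valid because $e\neq 2$): the $\text{KZ}$ functor $\sO_\sR\to \wH_\sR\textup{-mod}$ is a $1$-faithful covering, hence fully faithful on the subcategory of $\Delta$-filtered modules and preserving $\Ext^1$ between them. Transferring to right modules via the anti-involution $T_w\mapsto T_{w^{-1}}$ of $\sH$ yields an analogous covering $\text{KZ}':\sO'_\sR\to\modwH_{\sR}$, where $\sO'_\sR$ is a highest weight category whose standard objects map to $\wS_{E,\sR}$ and whose partial order is the reverse of $\leq_f$. The iterative procedure of \S5.2 is a Bongartz completion of $\wS_E$: starting from $\wS_E$, it adjoins a minimal extension by $\wS_{E'}^{\oplus m_{E'}}$'s (for generators of $\Ext^1(\wS_{E'},-)$), and iterates over increasing $f$-values until $\Ext^1(\wS_{E'},\wX_{E,\sR})=0$ for all $E'$. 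Hence the unique $\Delta'$-filtered preimage $T\in\sO'_\sR$ of $\wX_{E,\sR}$ has both a $\Delta'$-filtration and, by the $\Ext^1$ vanishing and $1$-faithfulness, a $\nabla'$-filtration. It is thus a tilting module; matching the distinguished ``lowest'' section $\wS_E$ in the filtration of $\wX_{E,\sR}$ with the unique occurrence of the corresponding weight identifies $T$ with the indecomposable tilting $T'(E)$.

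Indecomposable tiltings in any Krull--Schmidt highest weight category have local endomorphism ring, with residue field that of the base; here that field is $\bbC$. By full faithfulness of $\text{KZ}'$ on $\Delta'$-filtered objects, $\End_{\wH_\sR}(\wX_{E,\sR})\cong \End_{\sO'_\sR}(T'(E))$ is local with residue field $\bbC$. Faithfully flat descent along $\sQ\to\sR$ then yields that $\End_{\wH}(\wX_E)$ is local with residue field $k$. Since $\wX_E$ is $\sQ$-free (an iterated extension of $\sQ$-free modules), the natural base change map $\End_{\wH}(\wX_E)\otimes_\sQ k\to \End_{\wH_k}(\wX_E\otimes k)$ is an isomorphism, and locality with residue field $k$ transfers to the right-hand side. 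Indecomposability of $\wX_E$ and of $\wX_E\otimes k$ then follows from locality of their endomorphism rings.

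The principal obstacle will be Step~2, namely rigorously setting up the right-module version of Rouquier's covering theorem and confirming that the Bongartz-completion construction of \S5.2 really models an indecomposable tilting on the other side of KZ. Concretely, one must verify that the order-reversal $\leq_f\leftrightarrow\geq_f$ induced by passing from left to right modules produces a genuine highest weight structure $\sO'_\sR$, that $\wS_\bullet$-filtered modules lift uniquely to $\Delta'$-filtered objects in $\sO'_\sR$, and that the choices of generators in the construction do not affect the isomorphism class of $\wX_E$ (which they cannot once the tilting identification is made, by uniqueness of $T'(E)$). With this transfer in hand, the locality claim is standard highest-weight theory.
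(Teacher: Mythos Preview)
Your overall architecture---pass to $\sR$, invoke Rouquier's $1$-faithful covering, identify $\wX_{E,\sR}$ with the KZ-image of an indecomposable object in a highest weight category, and descend---is the same as the paper's. The paper, however, does \emph{not} use an anti-involution to manufacture a right-module category $\sO'_\sR$; instead it applies the $\sR$-linear dual $(-)^*$, so that $(\wX_{E,\sR})^*$ is a left $\wH_\sR$-module lying in the image of the actual KZ functor from the genuine category $\sO$ of \cite{GGOR03,Ro08}. Because dualising reverses the direction of extensions, the lifted object $P\in\sO$ is $\Delta$-filtered with $\Ext^1_\sO(P,\Delta(E'))=0$ for all $E'$, hence \emph{projective} rather than tilting. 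The paper then proves that $P$ is the projective cover of $\Delta(E)$ by an inductive truncation argument on the $f$-value, comparing the construction of \S\ref{firstconstruction} step by step with the projective covers in the successive truncations $\sO_i$.

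Your tilting route is an interesting alternative, but as written it has two real gaps. First, the category $\sO'_\sR$ obtained via $T_w\mapsto T_{w^{-1}}$ is not constructed: this anti-involution swaps left and right cells, so it is not clear that the twist of $\wS_E$ is $\mathrm{KZ}$ of a standard object in any RDAHA category $\sO$, nor that Rouquier's $1$-faithfulness theorem applies to such an $\sO'_\sR$. The paper avoids this entirely by using $(-)^*$ and staying inside the established $\sO$. Second, even granting a suitable $\sO'_\sR$, your identification $T\cong T'(E)$ is incomplete: a tilting object $T$ with $[T:\Delta'(E)]=1$ and all other $\Delta'$-sections strictly below $E$ decomposes as $T'(E)\oplus T''$ with $T''$ tilting and supported strictly below $E$, and you have not argued $T''=0$. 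This is exactly the minimality issue the paper handles (for projectives) with its truncation argument---and which it explicitly flags as missing from \cite[Prop.~4.45]{Ro08} as stated there. You identify this as ``the principal obstacle,'' but do not resolve it.

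One smaller correction: the base-change isomorphism $\End_{\wH}(\wX_E)\otimes_\sQ k\cong\End_{\wH_k}(\wX_E\otimes k)$ does \emph{not} follow from $\sQ$-freeness of $\wX_E$ alone. The paper obtains it from the vanishing $\Ext^1_{\wH}(\wX_E,\wX_E)=0$, which in turn follows because $\wX_E$ is filtered by modules $\wS_{E'}$ and $\Ext^1_{\wH}(\wS_{E'},\wX_E)=0$ for all $E'$.
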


\begin{proof}
It is clear that $\wX_{E,\sR}=\wX_E\otimes_\sQ\sR$ can be constructed from $\wS_{E,\sR}$ in the same way that $\wX_E$ 
is constructed from $\wS_E$, again using the method of \S5.2. Also, the proof of \cite[Thm. 6.8]{Ro08} shows that the $\sR$-dual of $\widetilde S_{E,\sR}$ is the KZ-image
of the standard module $\Delta_\sR(E)$ in the $\sR$-version of $\sO$. (Recall the issues in ftn.~2.)

Consequently, by the 1-faithful property,  $(\wX_{E,\sR})^*$ is the image of a dually constructed module $P$ under the
functor KZ, filtered by standard modules, and with $\Ext^1_{\sO}(P,-)$ vanishing on all standard modules. Such a module $P$ is projective in $\sO$, by \cite[Lem.~4.22]{Ro08}. (We remark that both $\sO$ and KZ would be given a subscript $\sR$ in \cite{GGOR03} though not in \cite{Ro08}.)

If we knew $P$ were indecomposable, we could say $\wX_{E,\sR}$ is indecomposable. However, the indecomposability
of $P$ requires proof.\footnote{A similar point should be made regarding the uniqueness claim in \cite[Prop.~4.45]{Ro08}, which is false without a minimality assumption on $Y(M)$ there.} We do this by showing $P$ is the projective cover in $\sO$ of the standard module $\Delta(E)=\Delta_\sO(E)$. We can, instead, inductively show the truncation $P_i$, associated to the poset ideal of all $E'\in\text{Irr}({\mathbb Q}W)$ with $f(E')\leq i$, is the projective cover of $\Delta(E)$ in the associated truncation $\sO_i$ of $\sO$. This requires $\Delta(E)$ to be an object of $\sO_i$, or equivalently $f(E)\leq i$. 

If $f(E)=i$, then $P_i=\Delta(E)$ is trivially the projective cover of $\Delta(E)$. Inductively, $P_{i-1}$ is the projective cover of $\Delta(E)$ in $\sO_{i-1}$ for some $i>f(E)$. Let $P'$ denote the projective cover of $\Delta(E)$ in $\sO_i$. The truncation $(P')_{i-1}$ to $\sO_{i-1}$ of $P'$ --- that is, its largest quotient which is an object of $\sO_{i-1}$ --- is clearly isomorphic to $P_{i-1}$. Let $\varphi:P'\to P_{i}$ be a homomorphism extending a given isomorphism $\psi:(P')_{i-1}\to P_{i-1}$ and let $\tau:P_i\to P'$ be a homomorphism extending $\psi^{-1}$. Let $M,M'$ denote the kernels of the natural surjections $P_i\twoheadrightarrow P_{i-1}$ and $P'\twoheadrightarrow (P')_{i-1}$. 
The map $\tau\varphi:P'\to P'$ is surjective and, consequently, it is an isomorphism. It induces the identity on $(P')_{i-1}$. Therefore, the induced map
$$\tau|_M\varphi|_{M'}:M'\longrightarrow M'$$
 is an isomorphism, and $M=M'\oplus M''$ for some object $M''$ in $\sO$. By construction, $M$ is a direct sum of objects $\Delta(E')$, with $f(E')=i$, each appearing with multiplicity $m_{E'}=\text{rank}(\Ext^1_\sO(P_i,\Delta(E')))$. However, 
$$\Ext^1_\sO(P_{i-1},\Delta(E'))\cong\Hom_\sO(M',\Delta(E')).$$ 
It follows that $M''=0$ and $P_i\cong P'$ is indecomposable.

In particular, $P$ is indecomposable and consequently $\wX_{E,\sR}$ is indecomposable, as noted. In turn, this implies $\wX_E$ is indecomposable. The 0-faithfulness (or just the covering property itself) of the cover given by $\sO$ and KZ imply 
$$\End_{\wH_\sR}(\wX_{E,\sR})^{\text{op}}\cong \End_{\wH_\sR}(\wX_{E,\sR}^*)^{\text{op}}\cong\End_\sO(P).$$
Thus, the base-changed module $P\otimes_\sR \bbC$  has endomorphism ring
$$\End_{\sO_\bbC}(P\otimes_\sR \bbC)\cong\End_{\sO}(P)\otimes_\sR \bbC,$$
where $\sO_\bbC$ is the $\bbC$-version of $\sO$. This is a standard consequence of the projectivity of $P$. By \cite[Thm.~5.3]{Ro08}, the $\bbC$ versions of KZ and $\sO$ give a cover for $\wH_\sR\otimes\bbC$. So $\End_{\wH_\bbC}(\wX_{E,\sR}\otimes \bbC)^{\text{op}}\cong\End_{\sO_\bbC}(P\otimes \bbC)$ is local, with radical quotient $\bbC$.

However, we have
$$(\wX_E\otimes_{\sQ}k)\otimes_k\bbC\cong\wX_{E,\sR}\otimes\bbC.$$
In particular, $\wX_E\otimes_{\sQ}k$ is indecomposable since (by endomorphism ring considerations) the $\wH_\sR\otimes\bbC$-module $\wX_{E,\sR}\otimes\bbC$ is indecomposable. So the endomorphism ring of $\wX_E\otimes_{\sQ}k$ over the finite dimensional algebra $\wH\otimes_\sQ k$ is local. The radical quotient is a division algebra $D$ over $k$ with base change $-\otimes_k\bbC$ to a semisimple quotient of 
$\End_{\wH_\bbC}(\wX_{E,\sR}\otimes \bbC)$, which could only be $\bbC$ itself. Consequently, $D=k$. 

Finally, the vanishing $\Ext_\wH^1(\wX_E,\wX_E)=0$ implies 
$$\End_\wH(\wX_E)\otimes_\sQ k\cong\End_{\wH_k}(\wX_E\otimes_\sQ k).$$
So the ring $\End_\wH(\wX_E)$ is local with radical quotient $k$. This completes the proof.
\end{proof}

\begin{lem} Assume $e\not=2$.
Let $E\in\text{\rm Irr}(\bbQ W)$. Then $\wX_E$ is a direct summand of $\wT^+$.
\end{lem}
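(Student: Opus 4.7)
The plan is to mimic the RDAHA-cover strategy of the preceding proposition: base-change to $\sR$, realize $(\wT^+_\sR)^*$ as the $\mathrm{KZ}$-image of a projective object in $\sO_\sR$, decompose it by Krull--Schmidt, and descend back to $\sQ$.

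First I would use that Lemma~\ref{Lemma5.1}, dualized, gives $\wS_\omega \cong \bigoplus_{E\in\omega}\wS_E$; combined with the filtration of $\wT^+ = \bigoplus_\omega \wwT_\omega$ by the $\wS_\omega$ established during the proof of Theorem~\ref{4.4}, this refines to a filtration of $\wT^+$ by the modules $\wS_E$. The Ext-vanishing~(\ref{Finally}) then yields $\Ext^1_\wH(\wS_E,\wT^+)=0$ for every $E$. Since all modules involved are $\sQ$-projective of finite rank, $\sQ$-dualization (which swaps left/right modules and reverses the arguments of $\Ext$) followed by base change to $\sR$ gives $\Ext^1_{\wH_\sR}((\wT^+_\sR)^*, \wS(E)_\sR)=0$, and $(\wT^+_\sR)^*$ is filtered by the modules $\wS(E)_\sR$.

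Next, 1-faithfulness of $\mathrm{KZ}$ over $\sR$ (\cite[Thm.~5.3]{Ro08}) produces a standard-filtered object $Q\in\sO_\sR$ with $\mathrm{KZ}(Q)\cong (\wT^+_\sR)^*$ and $\Ext^1_{\sO_\sR}(Q,\Delta_\sR(E))=0$ for all $E$. By \cite[Lem.~4.22]{Ro08}, $Q$ is projective in $\sO_\sR$, so Krull--Schmidt (over the complete local ring $\sR$) decomposes $Q\cong\bigoplus_E P(E)^{m_E}$. Applying $\mathrm{KZ}$ and using the identification $\mathrm{KZ}(P(E))\cong \wX_{E,\sR}^*$ from the preceding proposition, I get $\wT^+_\sR\cong \bigoplus_E \wX_{E,\sR}^{m_E}$. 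The bound $m_E\geq 1$ is immediate, because $\wS_E$ embeds in $\wT^+$ as a direct summand of the bottom section $\wS_\omega$ of some $\wwT_\omega\subseteq\wT^+$ (for $\omega$ the cell of $E$), so $\Delta_\sR(E)$ appears as a quotient of $Q$, forcing $P(E)$ to appear as a summand.

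Finally I would descend to $\sQ$. The preceding proposition establishes that $\End_\wH(\wX_E)$ is local with residue field $k$ and that $\wX_{E,\sR}$ is also indecomposable with local endomorphism algebra. Combining this locality with faithful flatness of $\sQ\to\sR$ and Krull--Schmidt for modules with local endomorphism rings, the primitive idempotent in $\End_{\wH_\sR}(\wT^+_\sR)=\End_\wH(\wT^+)\otimes_\sQ\sR$ cutting out a copy of $\wX_{E,\sR}$ descends to a primitive idempotent of $\End_\wH(\wT^+)$ whose image is isomorphic to $\wX_E$. The main obstacle is exactly this last descent: because $\sQ\to\sR$ combines completion with the residue field extension $k=\bbQ(\sqrt\zeta)\to\bbC$, standard Hensel-lifting of idempotents does not apply directly, and one must combine the local structure of $\End_\wH(\wX_E)$ over $\sQ$ with the matching specializations of $\wT^+$ at $k$ and $\bbC$ (both of which agree with the specialization of $\wT^+_\sR$ modulo the maximal ideal of $\sR$) in order to transfer the primitive idempotent back to $\End_\wH(\wT^+)$.
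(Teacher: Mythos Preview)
Your detour through $\sR$ creates a descent problem that you correctly identify but do not resolve, and this is a genuine gap. Faithful flatness of $\sQ\to\sR$ lets you detect isomorphisms but does not by itself let you descend idempotents of $\End_\wH(\wT^+)\otimes_\sQ\sR$ to $\End_\wH(\wT^+)$; since $\sQ$ is not Henselian, there is no automatic lifting of idempotents from the residue field either. Your closing sentence gestures at a fix but does not provide one.

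The paper avoids this entirely by working directly over $\sQ$, and the ingredients are already in your hands. You have observed that $\wS_E$ sits as a direct summand of the bottom section $\wS_\omega$ of some $\wwT_\omega\subseteq\wT^+$, and that $\Ext^1_\wH(\wS_{E'},\wT^+)=0$ and $\Ext^1_\wH(\wS_{E'},\wX_E)=0$ for all $E'$. These vanishings are exactly what one needs to extend the identity on $\wS_E$ in both directions: extend the inclusion $\wS_E\hookrightarrow\wwT_\omega$ to a map $\phi:\wwT_\omega\to\wX_E$ (using that $\wwT_\omega/\wS_E$ is filtered by various $\wS_{E'}$ and $\Ext^1$ of these into $\wX_E$ vanishes), and extend the inclusion $\wS_E\hookrightarrow\wX_E$ to a map $\tau:\wX_E\to\wwT_\omega$ (using that $\wX_E/\wS_E$ is filtered by $\wS_{E'}$'s and their $\Ext^1$ into $\wwT_\omega$ vanishes, by Corollary~\ref{vanishing} or the construction of $\wX_\omega$). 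Then $\phi\tau\in\End_\wH(\wX_E)$ restricts to the identity on $\wS_E$. Since $(\wS_E)_K$ is the unique summand of its isotype in $(\wX_E)_K$, restriction to $\wS_E$ is a well-defined ring map $\End_\wH(\wX_E)\to\End_\wH(\wS_E)$; the image of $\phi\tau$ is a unit, so $\phi\tau$ lies outside the maximal ideal of the local ring $\End_\wH(\wX_E)$ and is therefore a unit. Hence $\wX_E$ is a direct summand of $\wwT_\omega$, and so of $\wT^+$.

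In short: the $\sR$-level Krull--Schmidt picture is correct and motivating, but the actual splitting can and should be produced over $\sQ$ using the $\Ext^1$-vanishing you already recorded together with the locality of $\End_\wH(\wX_E)$ from the preceding proposition. No descent is needed.
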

\begin{proof}
Suppose first $\wS(E)$ is a direct summand of a left cell module $\wS(\omega)=:\wS^\omega\cong(\wS_\omega)^*$ where $\omega$ contains the longest element of a parabolic subgroup $W_\lambda$, $\lambda\subseteq S$. This implies $\wS_\omega$ is the lowest term in the dual left cell module filtration of $x_\la\wH$. Consequently, there is an inclusion $\psi:\wS_E\to x_\la\wH$ with cokernel filtered by (sections)
$\wS_{E'}$, $E'\in\text{\rm Irr}({\mathbb Q}W)$. Thus, $\psi^{-1}:\psi(\wS_E)\to\wX_E$ may be extended to a map $\phi:x_\la\wH\to\wX_E$ of $\wH$-modules. Similarly (using $e\neq2$ and Corollary \ref{vanishing}), there is a map $\tau:\wX_E\to x_\la\wH$ extending $\psi$. The composite $\tau\phi$ restricts to the identity on $\wS_E\subseteq\wX_E$.

On the other hand, restriction from $\wX_E$ to $\wS_E$ defines a homomorphism
$$\End_{\wH}(\wX_E)\longrightarrow \End_{\wH}(\wS_E)$$
since $(\wS_E)_K$ is a unique summand of the (completely reducible) $\wH\otimes_\sQ K$-module $\wX_E\otimes_\sQ K$. (Observe $\wS_E=\wX_E\cap(\wS_E)_K$, since the $\sQ$-torsion module $(\wX_E\cap(\wS_E)_K)/\wS_E$ must be zero in the $\sQ$-torsion free module $\wX_E/\wS_E$.)
Thus, $\tau\phi$ is a unit in the local endomorphism ring $\End_\wH(\wX_E)$, so $\wX_E$ is a summand of $x_\la\wH$, and hence of $\wT$.

Next consider the case in which $\wS_E$ is a summand of a dual left cell module $\wS_\omega$ (this always happens for some $\omega$), but $\omega$ does not contain the longest element of any parabolic subgroup. In this case, $\wX_\omega$ is one of the summands of $\widetilde\sX$ by construction. The argument above may be repeated with $\wX_\omega$ playing the role of $x_\la\wH$. In the same way, $\wX_E$ is a direct summand of $\wX_\omega$, and thus of $\widetilde\sX$.

In both cases, we conclude that $\wX_E$ is a direct summand of $\wT\oplus\widetilde\sX=\wT^+$.
\end{proof}

\begin{thm}\label{cases} Assume that $e\not=2$.
The $\sQ$-algebra $\wsA^+$ is quasi-hereditary, with standard modules $\wDelta(E)=\Hom_\wH(\wS_E,\wT^+)$, $E\in\text{\rm Irr}({\mathbb Q}W)$, and partial order $<_f$.
\end{thm}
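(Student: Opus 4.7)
The strategy is to refine the strict stratifying system of Theorem~\ref{4.4}, indexed by left cells $\Omega$, to one indexed by $\irr({\mathbb Q}W)$, using the indecomposable modules $\wX_E$ of the preceding Proposition, and then to assemble a heredity chain. First, for each $E\in\irr({\mathbb Q}W)$, define $P(E):=\Hom_\wH(\wX_E,\wT^+)$. Since $\wX_E$ is a direct summand of $\wT^+$ by the preceding Lemma, $P(E)$ is a direct summand of the regular left $\wsA^+$-module, i.e., projective; and since $\End_\wH(\wX_E)$ is local by the preceding Proposition, $P(E)$ is indecomposable with associated primitive idempotent $e_E\in\wsA^+$.

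Next, I would lift the filtration of $\wX_E$ from \S5.2 (bottom section $\wS_E$, higher sections $\wS_{E'}^{\oplus m_{E'}}$ with $f(E')>f(E)$) to a $\wDelta$-filtration of $P(E)$ by applying $\Hom_\wH(-,\wT^+)$. The necessary vanishing
$$\Ext^1_\wH(\wS_{E'},\wT^+)=0\quad\text{for every }E'\in\irr({\mathbb Q}W)$$
follows from \eqref{Finally} in the proof of Theorem~\ref{4.4} together with Lemma~\ref{Lemma5.1}, since each $\wS_{E'}$ is a direct summand of some $\wS_\omega$. The resulting filtration of $P(E)$ has top section $\wDelta(E)$ with multiplicity one and lower sections $\wDelta(E')^{\oplus m_{E'}}$ with $f(E')>f(E)$, giving axiom~(3) of the strict stratifying system definition for $\{\wDelta(E)\}_{E\in\irr({\mathbb Q}W)}$ with quasi-poset $(\irr({\mathbb Q}W),\leq_f)$. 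Axiom~(1) comes from the identification $\Hom_{\wsA^+}(P(E),\wDelta(E'))\cong\Hom_\wH(\wS_{E'},\wX_E)$ combined with base change to the semisimple $\wH_K$ and $\sQ$-torsion-freeness; axiom~(2) follows since the $P(E)$'s cover all simples of $\wsA^+$, with heads being the distinct simples $L(E)$ (mutual non-isomorphism of the $\wX_E\otimes k$'s comes from the RDAHA covering analysis in the proof of the preceding Proposition).

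For the quasi-hereditary upgrade, I would compute $\End_{\wsA^+}(\wDelta(E))\cong\sQ$: the surjection $P(E)\twoheadrightarrow\wDelta(E)$ embeds this ring into $\Hom_{\wsA^+}(P(E),\wDelta(E))\cong\Hom_\wH(\wS_E,\wX_E)$, which is a $\sQ$-submodule of the one-dimensional $\Hom_{\wH_K}(\wS_E\otimes K,\wX_E\otimes K)\cong K$ (using that $\wS_E\otimes K$ occurs in $\wX_E\otimes K$ with multiplicity one); it already contains the canonical inclusion $\iota\colon\wS_E\hookrightarrow\wX_E$, which is the image of the identity, so it equals $\sQ\cdot\iota\cong\sQ$. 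A heredity chain for $\wsA^+$ is then built by ordering $\irr({\mathbb Q}W)$ by decreasing $f$-value into layers $\Lambda_1,\Lambda_2,\ldots$ and setting $J_i:=\wsA^+\bigl(\sum_{E\in\Lambda_1\cup\cdots\cup\Lambda_i}e_E\bigr)\wsA^+$; the fact that $f(E)=f(E')$ with $E\neq E'$ forces $\Hom_\wH(\wX_E,\wX_{E'})=0$ (again by $K$-level comparison and torsion-freeness) makes the centraliser $e_{\Lambda_i}\wsA^+ e_{\Lambda_i}$ at each step a product of copies of $\sQ$, which is the heredity-ideal condition modulo $J_{i-1}$.

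The main obstacle is the construction of the heredity chain, which rests on the two non-trivial identifications $\End_{\wsA^+}(\wDelta(E))\cong\sQ$ and $\Hom_\wH(\wX_E,\wX_{E'})=0$ for $E\neq E'$ of equal $f$-value, together with the mutual non-isomorphism of the $\wX_E\otimes k$'s. The remainder of the argument is a direct assembly of Theorem~\ref{4.4} with Lemma~\ref{Lemma5.1} and the Lemma and Proposition preceding the theorem.
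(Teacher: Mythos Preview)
Your overall architecture matches the paper's: define $P(E)=\Hom_{\wH}(\wX_E,\wT^+)$, show it is projective indecomposable, pull back the filtration of $\wX_E$ to get a $\wDelta$-filtration of $P(E)$, and verify the stratifying axioms using the idempotent identification $\Hom_{\wsA^+}(P(E),\wDelta(E'))\cong\Hom_{\wH}(\wS_{E'},\wX_E)$. All of that is fine, and your computation $\End_{\wsA^+}(\wDelta(E))\cong\sQ$ is also correct once you note that a finitely generated $\sQ$-subalgebra of $K$ must equal $\sQ$.

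The genuine gap is in your heredity-chain paragraph. The claim that $f(E)=f(E')$ with $E\not\cong E'$ forces $\Hom_{\wH}(\wX_E,\wX_{E'})=0$ is false: both $\wX_E$ and $\wX_{E'}$ are built by successively attaching copies of $\wS_{E''}$ with $f(E'')>f(E)$, and nothing prevents the same $E''$ from appearing in both. Over $K$ the two modules then share the irreducible summand $\wS_{E'',K}$, so $\Hom_{\wH_K}(\wX_{E,K},\wX_{E',K})\neq 0$, and by flatness the $\sQ$-level Hom is nonzero too. For the same reason the diagonal pieces $e_E\wsA^+e_E\cong\End_{\wH}(\wX_E)$ are local with residue field $k$, but are \emph{not} $\sQ$ in general (they have a nilpotent radical coming from the higher sections). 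So $e_{\Lambda_i}\wsA^+e_{\Lambda_i}$ is not a product of copies of $\sQ$.

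What you need is the corresponding statement \emph{modulo} $J_{i-1}$: there the image of $P(E)$ is $\wDelta(E)$, and the relevant Hom is $\Hom_{\wsA^+}(\wDelta(E'),\wDelta(E))$, which vanishes for $E\not\cong E'$ with $f(E)=f(E')$ because $\wDelta(E)$ has irreducible head $L(E)$, its other composition factors have strictly smaller $f$-value, and the $L(E)$ are pairwise non-isomorphic. The paper proceeds exactly along these lines but does not build the chain by hand: it shows each $\wDelta(E)$ has irreducible head, proves the implication $\Hom_{\wsA^+}(\wP(E'),\wDelta(E))\neq 0\Rightarrow f(E')<f(E)$ for $E\not\cong E'$ via a ``double diamond'' identification $\wDelta(E)^{\diamond}\cong\wS_E$, and then cites \cite[Thm.~1.2.8]{DPS98a}, \cite[Cor.~2.5]{DS94}, or \cite[Thm.~4.16]{Ro08} to conclude quasi-heredity. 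Your route is salvageable, but only after moving the centraliser computation into the successive quotients.
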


\begin{proof}
We have already seen that this algebra is standardly stratified with strict stratifying system $\{\wDelta(\omega)\}_{\omega\in\Omega}$.
Clearly, $\wDelta(\omega)$ is a direct sum of various $\wDelta(E)$'s, and every $\wDelta(E)$ arises as such a summand.

Put $\wP(E)=(\wX_E)^\diamond:=\Hom_\wH(\wX_E,\wT^+)$, $E\in\text{Irr}({\mathbb Q}W)$.
Then $\wP(E)$ is a direct summand of $\wsA^+=\End_\wH(\wT^+)$, viewed as a left module over itself. Thus, $\wP(E)$ is projective as an $\wsA^+$-module, and $\wP(E)^\diamond:=\Hom_{\wsA^+}(\wP(E),\wT^+)$ is naturally isomorphic to $\wX_E$. In particular, the contravariant functor $(-)^\diamond$ gives an isomorphism
$$\End_{\wsA^+}(\wP(E))\cong (\End_\wH(\wX_E))^\text{op}.$$
Consequently, $\wP(E)$ also has a local endomorphism ring with radical quotient $k$, as does $\End_{\wsA^+_k}(\wP(E)\otimes_\sQ k)$. It follows that $\wP(E)$ is an indecomposable projective $\wsA^+$-module with a irreducible head. (The arguments in this paragraph are largely standard, many taken from \cite{DPS98a}.)

By (\ref{Finally}), $\Ext^1_\wH(\wS_\omega,\wT^+)=0$ for all dual left cell module $\wS_\omega$. Consequently, a similar vanishing holds with $\wS_\omega$ replaced by any module $\wS_{E'}$, $E'\in\text{Irr}({\mathbb Q}W)$. It follows that the restriction map
$$\wP(E)=\Hom_{\wH}(\wX_E,\wT^+)\longrightarrow \Hom_\wH(\wS_E,\wT^+)=\wDelta(E)$$
is surjective. Hence, $\wDelta(E)$ has an irreducible head. Also, repeating the argument for filtered submodules of $\wX_E$, we find that the kernel of the above map  has a filtration with sections $\wDelta(E')$, $E'\in\text{Irr}({\mathbb Q}W)$ (rather than $\wX_E$ itself), satisfying $f(E')>f(E)$.

Next, we claim that $\wDelta(E)^\diamond:=\Hom_{\wsA^+}(\wDelta(E),\wT^+)$ is naturally isomorphic to $\wS_E$. More precisely, we claim that the natural map $\wS_E\overset{\text{ev}}\to(\wS_E)^{\diamond\diamond}$ is an isomorphism. We showed above that the sequence
$$0 \longrightarrow (\wX_E/\wS_E)^\diamond\longrightarrow(\wX_E)^\diamond\longrightarrow(\wS_E)^\diamond\longrightarrow0$$
is exact. Applying $(-)^\diamond$ once more, we get an injection
$$0\longrightarrow(\wS_E)^{\diamond\diamond}\longrightarrow(\wX_E)^{\diamond\diamond}$$
with $\wX_E\overset{\text{ev}}\to(\wX_E)^{\diamond\diamond}$ an isomorphism. This gives inclusions
$$\wS_E\cong\text{ev}(\wS_E)\subseteq(\wS_E)^{\diamond\diamond}\subseteq (\wX_E)^{\diamond\diamond}\cong\wX_E.$$
If $(-)\otimes_\sQ K$ is applied, the first inclusion becomes an isomorphism. This gives 
$$(\wS_E)^{\diamond\diamond}\subseteq (\wX_E)^{\diamond\diamond}\cap(\wS_E)_K=\wS_E$$
identifying $\wX_E$ with $(\wX_E)^{\diamond\diamond}$ and $\wS_E$ with its image in $(\wX_E)^{\diamond\diamond}$. Consequently,
ev$(\wS_E)=(\wS_E)^{\diamond\diamond}$, proving the claim.

Finally, we suppose $E\not\cong E'\in\text{Irr}({\mathbb Q}W)$ and $\Hom_{\wsA^+}(\wP(E'),\wDelta(E))\neq0$. Using the identifications $\wP(E')=(\wX_{E'})^\diamond$,
$\wDelta(E)=(\wS_E)^\diamond$, $\wP(E')^\diamond\cong \wX_{E'}$, and $\wDelta(E)^\diamond\cong \wS_E$, we have
$$0\neq\Hom_{\wsA^+}(\wP(E'),\wDelta(E))\cong\Hom_\wH(\wS_E,\wX_{E'})\subseteq \Hom_{\wH_K}(\wS_E\otimes_\sQ K,\wX_{E'}\otimes_\sQ K).$$
This implies $f(E')<f(E)$. It follows now from \cite[Thm. 1.2.8]{DPS98a} (in the context of stratified algebras),
 \cite[Cor. 2.5]{DS94}, or \cite[Thm.~4.16]{Ro08} that $\wsA^+$ is quasi-hereditary over $\sQ$.
\end{proof}

We are now ready to establish the category equivalence mentioned in the introduction. Again, we use the covering
theory of \cite{Ro08}.

\begin{thm}\label{cor6.5} Assume that $e\not=2$.
The category of left modules over the base-changed algebra
$$\wsA^+_\sR:=\wsA^+\otimes_\sQ\sR$$
is equivalent to the $\sR$-category $\sO$ of modules, as defined in \cite{Ro08} for the RDAHA  associated to $W$ over $\sR$.
\end{thm}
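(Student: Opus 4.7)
The plan is to recognize both $\sO_\sR$ and $\wsA^+_\sR\text{-mod}$ as 1-faithful highest-weight covers of $\wH_\sR\text{-mod}$ with matching highest-weight data, and then invoke the uniqueness theorem for such covers from \cite{Ro08}. Base-changing Theorem~\ref{cases} along the flat map $\sQ\to\sR$, the $\sR$-algebra $\wsA^+_\sR$ is quasi-hereditary with standards $\{\wDelta(E)_\sR\}_{E\in\irr(\bbQ W)}$ and partial order $<_f$, which matches the highest-weight structure of $\sO_\sR$ under the identification $\irr(\bbQ W)\leftrightarrow\irr(W)$.

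To furnish the required Schur functor on the $\wsA^+_\sR$-side, I would observe that $\wH_\sR=x_\emptyset\wH_\sR$ is a direct summand of $\wT^+_\sR$, so projection onto this summand yields an idempotent $e\in\wsA^+_\sR=\End_{\wH_\sR}(\wT^+_\sR)$ with $e\wsA^+_\sR e\cong\End_{\wH_\sR}(\wH_\sR)\cong\wH_\sR$. The resulting Schur functor $e(-)\colon\wsA^+_\sR\text{-mod}\to\wH_\sR\text{-mod}$ sends $\wDelta(E)_\sR=\Hom_{\wH_\sR}(\wS_{E,\sR},\wT^+_\sR)$ to $\Hom_{\wH_\sR}(\wS_{E,\sR},\wH_\sR)\cong\wS_\sR(E)$, which is precisely the KZ image of the RDAHA standard $\Delta_\sR(E)$ recalled in the proof of the earlier indecomposability proposition.

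The verification of the 1-faithful cover conditions for $e(-)$ then proceeds in parallel with the corresponding statement for KZ (the latter being \cite[Thm.~5.3]{Ro08}, which requires $e\ne2$): 0-faithfulness on $\Delta$-filtered modules follows from $\wT^+_\sR$ containing each indecomposable $\wX_{E,\sR}$ as a summand (the Lemma preceding Theorem~\ref{cases}), yielding a complete set of indecomposable projectives of $\wsA^+_\sR$ inside $\wsA^+_\sR e$; 1-faithfulness on $\Delta$-filtered modules reduces to the $\Ext^1$-vanishing $\Ext^1_{\wH_\sR}(\wS_{E,\sR},\wT^+_\sR)=0$ for all $E$, which is the $\sR$-base change of~\eqref{Finally} together with its extension from $\wS_{\omega}$ to the $\wS_E$'s noted at the start of the proof of Theorem~\ref{cases}. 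Rouquier's uniqueness theorem then delivers the desired equivalence $\sO_\sR\simeq\wsA^+_\sR\text{-mod}$ intertwining KZ and $e(-)$. The main obstacle is the matching of standards across the two covers: the $C$-versus-$C'$ basis conventions (Remark~\ref{AfterLemma5.1} and footnote~1 to Proposition~\ref{2.2}) leave the labeling bijection $\irr(\bbQ W)\leftrightarrow\irr(W)$ determined only up to a two-sided-cell-preserving permutation, but since $<_f$ is constant on two-sided cells, this ambiguity does not affect the hypotheses of the uniqueness theorem.
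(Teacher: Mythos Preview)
Your overall strategy---exhibit $(\wsA^+_\sR,\,\wsA^+_\sR e)$ as a $1$-faithful highest weight cover of $\wH_\sR$ and invoke Rouquier's uniqueness theorem---is a reasonable alternative to the paper's route. The paper does \emph{not} verify $0$- or $1$-faithfulness of the $\wsA^+_\sR$-cover directly. Instead it works from the $\sO$-side via \cite[Prop.~4.45, Cor.~4.46]{Ro08}: setting $Y=\bigoplus_E(\wX_{E,\sR})^*$, it checks that each $(\wX_{E,\sR})^*$ is the (minimal) ``$Y(M)$'' for $M=\text{KZ}(\Delta_\sR(E))$, deduces that $\sA':=\End_{\wH_\sR}(Y)^{\text{op}}$-mod is equivalent to $\sO$, and then shows $\sA'$ is Morita equivalent to $\wsA^+_\sR$ using the $(-)^\diamond$ functors (since $Y^{*\diamond}\cong\bigoplus_E\wP(E)_\sR$ is a projective generator for $\wsA^+_\sR$).

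There is, however, a genuine error in your $0$-faithfulness step. You claim that, because each $\wX_{E,\sR}$ is a summand of $\wT^+_\sR$, the indecomposable projectives $\wP(E)=(\wX_E)^\diamond$ all lie in $\wsA^+_\sR e$. But $\wsA^+_\sR e\cong\Hom_{\wH_\sR}(\wH_\sR,\wT^+_\sR)$ corresponds only to the summand $\wH_\sR$ of $\wT^+_\sR$; its indecomposable $\wsA^+_\sR$-summands are those $\wP(E)$ for which $\wX_{E,\sR}$ is a summand of $\wH_\sR$, i.e.\ is $\wH_\sR$-projective. That is false in general (for instance whenever $\wX_E=\wS_E$ but $\wS_E$ is not projective). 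Were your claim true, $e(-)$ would be a Morita equivalence and the theorem trivial. A correct argument for $0$-faithfulness is available, but it uses the double-$\diamond$ identity $\wS_E^{\diamond\diamond}\cong\wS_E$ proved inside Theorem~\ref{cases}, which is precisely the statement that the unit of the cover adjunction is an isomorphism on standards. Your $1$-faithfulness sketch is also too thin: the reduction to $\Ext^1_{\wH_\sR}(\wS_{E,\sR},\wT^+_\sR)=0$ requires $0$-faithfulness already in hand together with a suitable criterion from \cite{Ro08}, and that passage should be made explicit.
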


\begin{proof}
Continuing the proof of the theorem above, the projective indecomposable $\wsA^+$-modules are the various $\wP(E)=(\wX_E)^\diamond$. Consequently, $\wT^+=(\wsA^+)^\diamond$ is the direct sum of the modules, $\wX_E$, each with nonzero multiplicities. The modules
$\wX_{E,\sR}$ remain indecomposable, as observed in the proof of the indecomposability of the modules $\wX_E$ above. By construction, $\Ext^1_\wH(\wS_{E'},\wX_E)=0$ for all $E,E'\in\text{Irr}({\mathbb Q}W)$. Thus, there is a similar vanishing for $\wS_{E',\sR}$ and $\wX_{E,\sR}$, and---in the reverse order---for their $\sR$-linear duals.
Observe that $(\wS_{E',\sR})^*\cong \wS(E')\otimes_\sQ \sR$ is KZ$(\Delta(E'))$, taking $\Delta(E')=\Delta_{\sO}(E')$ to be the standard module for the category $\sO$ over
$\sR$ as discussed in \cite{Ro08} together with KZ for this category.

Put 
$$Y=\bigoplus_E(\wX_{E,\sR})^*$$ 
and set $Y(\wS_{E,\sR}^*)=(\wX_{E,\sR})^*$. This notation imitates that of \cite[Prop.~4.45]{Ro08}.  The first part of this proposition is missing a necessary minimality assumption on the rank of $Y(M)$, in the terminology there.\footnote{The proposition claims uniqueness for a pair $(Y(M), p_m)$. However, one gets another pair by adding a direct
summand $F(P)$ to the kernel of $p_m$, where $P$ is any finitely generated module  in the highest weight category
 $\sC$.} However, this is satisfied for $M=(\wS_{E,\sR})^*$ and $Y(M)=(\wX_{E,\sR})^*$ because $(\wX_{E,\sR})^*$ is indecomposable. 
Several other corrections, in addition to the minimality requirement, should be made to \cite[Prop.~4.45]{Ro08}: 
\begin{itemize}
\item $A'$ should be redefined as $\End_B(Y)^{\text{op}}$; 
\item $P'$ should be redefined as $\Hom_B(Y,B)^{\text{op}}$.
\end{itemize}
 In addition, $B$ in \cite[4.2.1]{Ro08} should be redefined as $\End_A(P)^{\text{op}}$. The ``op''s here and above insure action on the left, and consistency with \cite[Thm.~5.15, Thm.~5.15]{GGOR03}. The definition of $P'$ is given to be consistent with the basis covering property $\End_{A'}(P')^{\text{op}}\cong B$, as in \cite[Thm.~5.15]{GGOR03}---we do not need this fact below.

 With these changes,  \cite[Thm.~5.3, Prop.~4.45, Cor.~4.46]{Ro08} guarantees that  $\sA'$-{ mod} is equivalent to $\sO$, where $\sA'=\End_{\wH_\sR}(Y)$. (All we really need for this are the 0- and 1-faithfulness of the $\sO$ version of the KZ functor.)
 However,  $\End_{\wH_\sR}(Y)\cong\End_{\wH_\sR}(Y^*)^{\text{op}}$, and $Y^*$ is the direct sum $\oplus_E\wX_{E,\sR}$. Hence,
 $$Y^{*\diamond}\cong\bigoplus_E(\wX_{E,\sR})^\diamond\cong\bigoplus_E\wP(E)\otimes_\sQ \sR.$$
 
 Recall that $(\wX_{E,\sR})^{\diamond\diamond}\cong\wX_{E,\sR}$, so that the analogous property holds for $Y^*$.
 Thus, $\End_{\wH_\sR}(Y^*)^{\text{op}}\cong\End_{\wsA^+_\sR}(Y^{*\diamond})$. Since the module $Y^{*\diamond}$
  as displayed above is clearly a projective generator for $\wsA_\sR^+$, there is a Morita equivalence over $\sR$ of $\wsA_\sR^+$ with $\sA'$. Hence, $\wsA_\sR^+$-$\text{mod}$ is equivalent to $\sO$, as $\sR$-categories.
\end{proof}

\section{Appendix: comparison with \cite[Conj. 2.5.2]{DPS98a} }
    Conjecture \ref{conjecture} in this paper retains the most essential features of \cite[Conj. 2.5.2]{DPS98a}, but is more flexible. In particular,

\begin{itemize}
\item[(1)]  Conjecture \ref{conjecture} does not specify the preorder $\leq$, only requiring that it be strictly compatible with the partition of $\Omega$ into two-sided cells. This allows the use of the preorder $\leq_f$, defined in \S2 above. \cite[Conj. 2.5.2]{DPS98a} specifies for $\leq$ the preorder $\leq^{\text{\rm op}}_{LR}$ built from the preorder $\leq_{LR}$ originally used by Kazhdan-Lusztig to define the two-sided cells. In both cases, the set $\overline \Omega$ of ``strata" is the same, identifying with the set of two sided cells. 
\smallskip
    
    \item[(2)] Conjecture \ref{conjecture}  concerns the Hecke algebra $\sH$ (defined by the relations (\ref{relations}) over $\sZ={\mathbb Z}[t,t^{-1}]$, whereas \cite[Conj. 2.5.2]{DPS98a} uses 
    Hecke algebras over ${\mathbb Z}[t^2,t^{-2}]$. Largely, this change has been made to conform to the literature, which most often uses the former ring. There is an additional advantage that the quotient field ${\mathbb Q}(t)$ is almost always a splitting field
    for the Hecke algebra $\sH_{{\mathbb Q}(t)}$.\footnote{${\mathbb Q}(t)$ is always a splitting field in case the rank is greater than 2. In the rank 2 case of $^2F_4$, $\sH_{{\mathbb Q}(t)}$ splits after  
    $\sqrt{2}$ is adjoined. The conjecture in all rank 2 cases follows from  \cite[\S3.5]{DPS98a}. }

\item[(3)] The role of $\sA^+_R$ in Conjecture \ref{conjecture} is played by $\End_{\sH_R}(\sT^+_R)$ in \cite[Conj. 2.5.2]{DPS98a}. The two $R$-algebras are the same whenever $R$ is flat over $\sZ={\mathbb Z}[t,t^{-1}]$. While it is an interesting question as to whether or not such a base change property holds for any $\sZ$-algebra $R$, it seems best to separate this issue from the main stratification proposal of the conjecture.
\end{itemize}

\medskip

Finally, we mention that the original conjecture \cite[Conj. 2.5.2]{DPS98a}
 was checked in that paper for all rank two types (in both the equal and unequal parameter cases), and checked later in type $A$ for all ranks; see \cite{DPS98a}. These verifications show also that Conjecture \ref{conjecture} is true in these cases.

\end{document}